\shorttitle}
\@nx\MakeUppercase{\the\toks@}}
\patchcmd\@settitle{\uppercasenonmath\@title}{\Large}{}{}
\authors}
\newtheorem{theorem}{Theorem}[section]
\newtheorem{corollary}{Corollary}[section]
\newtheorem{lemma}{Lemma}[section]
\newtheorem{remark}{Remark}[section]
\numberwithin{equation}{section}
  \renewcommand{\thethqt}{\Alph{thqt}}
\begin{document}
\address{$^{[1_a]}$ University of Monastir, Faculty of Economic Sciences and Management of Mahdia, Mahdia, Tunisia}
\address{$^{[1_b]}$ Laboratory Physics-Mathematics and Applications (LR/13/ES-22), University of Sfax, Faculty of Sciences of Sfax, Sfax, Tunisia}
\email{\url{kais.feki@fsegma.u-monastir.tn}\,;\,\url{kais.feki@hotmail.com}}

\subjclass[2010]{46C05, 47A05, 47B65.}

\keywords{Positive operator, semi-inner product, numerical radius, seminorm.}

\date{\today}
\author[Kais Feki] {\Large{Kais Feki}$^{1_{a,b}}$}
\title[Further improvements of generalized numerical radius inequalities for Hilbert space operators]{Further improvements of generalized numerical radius inequalities for Hilbert space operators}

\maketitle

\begin{abstract}
Several new improvements of the $A$-numerical radius inequalities for operators acting on a semi-Hilbert space, i.e., a space generated
by a positive operator $A$, are proved. In particular, among other inequalities, we show that
\begin{align*}
\frac{1}{4}\|T^{\sharp_A} T+TT^{\sharp_A}\|_A
\leq\frac{1}{4}\Big(2\omega_A^2(T)+\gamma(T)\Big)
\leq \omega_A^2(T),
\end{align*}
where
$$\gamma(T)=\sqrt{\left(\|\Re_A(T)\|_A^2-\|\Im_A(T)\|_A^2\right)^2+4\|\Re_A(T)\Im_A(T)\|_A^2}.$$
Here $\omega_A(X)$ and $\|X\|_A$ denote respectively the $A$-numerical radius and the $A$-seminorm of an operator $X$. Also, $\Re_A(T):=\frac{T+T^{\sharp_A}}{2}$ and $\Im_A(T):=\frac{T-T^{\sharp_A}}{2i}$, where $T^{\sharp_A}$ is a distinguished $A$-adjoint operator of $T$. Further, some new refinements of the triangle inequality related to $\|\cdot\|_A$ are established.
\end{abstract}

\section{Introduction and Preliminaries}\label{s1}
Let $\mathcal{H}$ be a complex Hilbert space with inner product $\langle\cdot,\cdot\rangle$ and associated norm $\|\cdot\|$. The $C^*$-algebra of all bounded linear operators acting $\mathcal{H}$ will be denoted by $\mathbb{B}(\mathcal{H}).$ Let $T\in\mathbb{B}(\mathcal{H})$, the numerical radius and the usual operator norm of $T$ are defined respectively by
$$\omega(T)=\sup_{\|x\|=1}\left|\langle Tx,x\rangle\right|\;{\text{ and }}\;\|T\|=\sup_{\|x\|=1}\|Tx\|.$$
An operator $T\in\mathbb{B}(\mathcal{H})$ is said to be positive (denoted by $T\geq0$) if $\langle Tx,x\rangle \geq 0$ for all $x\in\mathcal{H}$. In all that follows, the range of every operator $T\in\mathbb{B}(\mathcal{H})$ is denoted by $\mathcal{R}(T)$, its null space by $\mathcal{N}(T)$ and $T^*$ is the adjoint of $T$. If $\mathcal{M}$ is an arbitrary linear subspace of $\mathcal{H}$, then $\overline{\mathcal{M}}$ denotes its closure in the norm topology of $\mathcal{H}$. Given a closed subspace $\mathcal{M}$ of $\mathcal{H}$, $P_{\mathcal{M}}$ stands for the orthogonal projection onto $\mathcal{M}$. For the rest of this paper, by an operator we mean a bounded linear operator and we assume that $A\in\mathbb{B}(\mathcal{H})$ is a nonzero positive operator. It is clear that $A$ induces a semi-inner product on $\mathcal{H}$ given by ${\langle x, y\rangle}_A = \langle Ax, y\rangle$ for all $x, y \in\mathcal{H}$. By ${\|\cdot\|}_A$ we denote the seminorm induced by ${\langle \cdot, \cdot\rangle}_A$, i.e. ${\|x\|}_A=\sqrt{{\langle x, x\rangle}_A}$ for every $x\in\mathcal{H}$. One can verify that ${\|x\|}_A = 0$ if and only if $x\in\mathcal{N}(A)$. This implies that ${\|\cdot\|}_A$ is a norm on $\mathcal{H}$ if and only if $A$ is injective. Further, we observe that the semi-Hibert space $(\mathcal{H}, {\|\cdot\|}_A)$ is a complete space if and only if $\mathcal{R}(A)$ is closed in $\mathcal{H}$. For $T\in \mathbb{B}(\mathcal{H})$, an operator $S\in \mathbb{B}(\mathcal{H})$
is called an $A$-adjoint operator of $T$ if the identity ${\langle Tx, y\rangle}_A = {\langle x, Sy\rangle}_A$ holds for every $x, y\in \mathcal{H}$, that is, $AS = T^*A$ (see \cite{acg1}). In general, the existence of an $A$-adjoint operator is not guaranteed. The set of all operators that admit $A$-adjoints will be denoted by $\mathbb{B}_{A}(\mathcal{H})$. By applying Douglas' theorem \cite{doug}, we get
$$\mathbb{B}_{A}(\mathcal{H})=\left\{T\in \mathbb{B}(\mathcal{H});\;\mathcal{R}(T^{*}A)\subseteq \mathcal{R}(A)\right\}.$$
If $T\in\mathbb{B}_{A}(\mathcal{H})$, then the ``reduced" solution of the equation $AX = T^*A$ is a distinguished $A$-adjoint
operator of $T$, which will be denoted by $T^{\sharp_A}$. We remark that if $T\in\mathbb{B}_{A}(\mathcal{H})$, then $T^{\sharp_A} \in \mathbb{B}_A({\mathcal{H}})$, $(T^{\sharp_A})^{\sharp_A}=P_{\overline{\mathcal{R}(A)}}TP_{\overline{\mathcal{R}(A)}}$ and $((T^{\sharp_A})^{\sharp_A})^{\sharp_A}=T^{\sharp_A}$. Moreover, if $S\in \mathbb{B}_A(\mathcal{H})$, then $TS \in\mathbb{B}_A({\mathcal{H}})$ and $(TS)^{\sharp_A}=S^{\sharp_A}T^{\sharp_A}.$ An operator $T\in\mathbb{B}(\mathcal{H})$ is said to be $A$-selfadjoint if $AT$ is selfadjoint, that is, $AT = T^*A$. Obviously if $T$ is $A$-selfadjoint, then $T\in\mathbb{B}_{A}(\mathcal{H})$. However, in general, the equality $T = T^{\sharp_A}$ may not hold. More precisely, one can verify that if $T\in\mathbb{B}_{A}(\mathcal{H})$, then $T = T^{\sharp_A}$ if and only if $T$ is $A$-selfadjoint and $\mathcal{R}(T) \subseteq \overline{\mathcal{R}(A)}$. Further, we recall that an operator $T$ is called $A$-positive if $AT\geq0$ and we write $T\geq_{A}0$. Clearly, $A$-positive operators are always $A$-selfadjoint. Now, if we denote by $\mathbb{B}_{A^{1/2}}(\mathcal{H})$ the set of all operators admitting $A^{1/2}$-adjoints, then another application of Douglas' theorem \cite{doug} gives
\begin{align*}
\mathbb{B}_{A^{1/2}}(\mathcal{H}) = \big\{T\in \mathbb{B}(\mathcal{H})\,; \,\, \exists\, c>0\,;
\,\,{\|Tx\|}_{A}\leq c{\|x\|}_{A}, \,\, \forall x\in \mathcal{H}\big\}.
\end{align*}
It is clear that $\mathbb{B}_{A}(\mathcal{H})$ and $\mathbb{B}_{A^{1/2}}(\mathcal{H})$ are two subalgebras of
$\mathbb{B}(\mathcal{H})$ which are neither closed nor dense in $\mathbb{B}(\mathcal{H})$.
Further, the proper inclusions $\mathbb{B}_{A}(\mathcal{H}) \subseteq \mathbb{B}_{A^{1/2}}(\mathcal{H})
 \subseteq \mathbb{B}(\mathcal{H})$ hold with equality if $A$ is injective and has a closed range in $\mathcal{H}$. For more details, we refer the reader to \cite{acg1,acg2,feki01}.

Given $T\in\mathbb{B}(\mathcal{H})$. If there exists $\lambda>0$ such that $\|Tx \|_{A} \leq \lambda \|x\|_{A}$, for all $x\in\overline{\mathcal{R}(A)}$, then it holds:
\begin{equation*}\label{semii}
\|T\|_A:=\sup_{\substack{x\in \overline{\mathcal{R}(A)},\\ x\not=0}}\frac{\|Tx\|_A}{\|x\|_A}=\displaystyle\sup_{\substack{x\in \overline{\mathcal{R}(A)},\\ \|x\|_A= 1}}\|Tx\|_{A}<\infty.
\end{equation*}
Of course, it $A=I$ we reach the definition of the classical operator norm. It was shown in \cite{fga} that for $T\in\mathbb{B}_{A^{1/2}}(\mathcal{H})$, we have
\begin{align}\label{fg}
\|T\|_A
&=\sup\big\{{\|Tx\|}_A\,; \,\,x\in \mathcal{H},\, {\|x\|}_A =1\big\}\nonumber\\
&=\sup\left\{|\langle Tx, y\rangle_A|\,;\;x,y\in \mathcal{H},\,\|x\|_{A}=\|y\|_{A}= 1\right\}.
\end{align}
It is useful to note that for every $T,S\in \mathbb{B}_{A^{1/2}}(\mathcal{H})$, we have
\begin{equation}\label{sousmultiplicative}
\|TS\|_A\leq \|T\|_A\|S\|_A.
\end{equation}
Further, clearly for $T\in\mathbb{B}_{A}(\mathcal{H})$, the operators $T^{\sharp_A}T$ and  $TT^{\sharp_A}$ are $A$-positive. In addition, it was shown in \cite{acg2} that
\begin{align}\label{diez}
{\|T^{\sharp_A}T\|}_A = {\|TT^{\sharp_A}\|}_A = {\|T\|}^2_A = {\|T^{\sharp_A}\|}^2_A.
\end{align}
Recently, the $A$-spectral radius of $A$-bounded operators is introduced by the present author in \cite{feki01} as follows
\begin{equation}\label{newrad}
r_A(T):=\displaystyle\inf_{n\geq 1}\|T^n\|_A^{\frac{1}{n}}=\displaystyle\lim_{n\to\infty}\|T^n\|_A^{\frac{1}{n}}.
\end{equation}
We mention that the second equality in \eqref{newrad} is also proved by the present author in \cite[Theorem 1]{feki01}. In addition, $r_A(\cdot)$ satisfies the commutativity property, which asserts that
\begin{equation}\label{commut}
r_A(TS)=r_A(ST),
\end{equation}
for every $T,S\in \mathbb{B}_{A^{1/2}}(\mathcal{H})$ (see \cite{feki01}). In all that follows, for any arbitrary operator $X\in \mathbb{B}_A({\mathcal H})$, we denote
$$\Re_A(X):=\frac{X+X^{\sharp_A}}{2}\;\;\text{ and }\;\;\Im_A(X):=\frac{X-X^{\sharp_A}}{2i}.$$
For $T\in\mathbb{B}(\mathcal{H})$, the $A$-numerical radius of an operator $T$ was firstly defined by Saddi in \cite{saddi} by
\begin{align*}
\omega_A(T)
&:= \sup \left\{|\langle Tx, x\rangle_A|\,;\;x\in\mathcal{H},\|x\|_A = 1\right\}.
\end{align*}
Recently, this concept received considerable attention by many authors. For more details, we refer the reader to \cite{bakfeki01,BPN,BPN2,PPglma,feki01,feki03,NSD,rout} and the references therein. It should be mentioned here that it may happen that ${\|T\|}_A$ and $\omega_A(T)$ are equal to $+ \infty$ for some $T\in\mathbb{B}(\mathcal{H})$ (see \cite{feki01,feki03}). However, it was shown in \cite{bakfeki01} that the above quantities are equivalent seminorms on $\mathbb{B}_{A^{1/2}}(\mathcal{H})$. More precisely, we have
\begin{equation}\label{refine1}
\frac{1}{2} \|T\|_A\leq\omega_A(T) \leq \|T\|_A.
\end{equation}
Recently, several refinements of the inequalities \eqref{refine1} have been proved by many authors (e.g., see \cite{feki03,BPN}, and the references therein). In particular, it has been shown that for $T\in\mathbb{B}_{A}(\mathcal{H})$, we have
\begin{align}\label{kit}
  \frac{1}{2}\sqrt{\|T^{\sharp_A} T+TT^{\sharp_A}\|_A}\le  \omega_A\left(T\right) \le \frac{\sqrt{2}}{2}\sqrt{\|T^{\sharp_A} T+TT^{\sharp_A}\|_A},
\end{align}
(see \cite{feki03}). If $A=I$, we get the well-known inequalities proved by Kittaneh in \cite[Theorem 1]{FK}. One main target of the present paper is to prove some new refinements of the first inequality in \eqref{kit}. The inspiration of our investigation comes from the recent works by Moradi et al. \cite{hasm,minmo}. Some of the obtained results are new even in the case that the underlying operator $A$ is the identity operator. In particular, among other inequalities, we prove that
for every $T \in \mathbb{B}_A(\mathcal{H})$ we have
\begin{align*}
&\frac{1}{2}\sqrt{\|T^{\sharp_A} T+TT^{\sharp_A}\|_A}\\
&\leq\frac{1}{2}\sqrt{2\omega_A^2(T)+\sqrt{\left(\|\Re_A(T)\|_A^2-\|\Im_A(T)\|_A^2\right)^2+4\|\Re_A(T)\Im_A(T)\|_A^2}}\leq \omega_A(T).
\end{align*}
In addition, several new refinements of the triangle inequality related to $\|\cdot\|_A$ are proved. Mainly, we prove that for every $T,S\in \mathbb{B}_A(\mathcal{H})$ we have
\begin{align*}
\| T+S\|_A
&\leq \sqrt{\frac{1}{2}\left(\|T\|_A^2+\|S\|_A^2+\sqrt{\left(\|T\|_A^2-\|S\|_A^2\right)^2+4\|TS^{\sharp_A}\|_A^2} \right)+2\omega_A(S^{\sharp_A}T)}\nonumber\\
&\leq \|T\|_A+\|S\|_A.
\end{align*}
Several applications of the obtained inequalities are also given.
\section{Results}\label{s2}
In this section, we present our results. In order to achieve the goals of the present section, we need the following lemma.
\begin{lemma}\label{s1}(\cite{bakfeki04,feki03})
Let $T\in \mathbb{B}(\mathcal{H})$ be an $A$-selfadjoint operator. Then, the following assertions hold:
\begin{itemize}
  \item [(i)] $T^{\sharp_A}$ is $A$-selfadjoint and $({T^{\sharp_A}})^{\sharp_A}=T^{\sharp_A}$.
  \item [(ii)] $\|T\|_{A}=\omega_A(T)=r_A(T)$.
  \item [(iii)] $\|T^n\|_{A}=\|T\|_{A}^n$ for any positive integer $n$.
    \item [(vi)] $T^{2n}\geq_A 0$ for any positive integer $n$.
\end{itemize}
\end{lemma}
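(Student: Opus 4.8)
The plan is to derive all four assertions from a single algebraic observation: for an $A$-selfadjoint $T$, the reduced solution satisfies $AT^{\sharp_A}=AT$. Indeed, $T^{\sharp_A}$ solves $AX=T^{*}A$ by definition, while $A$-selfadjointness reads $T^{*}A=AT$, so $AT^{\sharp_A}=T^{*}A=AT$. For assertion (i) I would take adjoints in $AT^{\sharp_A}=T^{*}A$ to get $(T^{\sharp_A})^{*}A=A^{*}T=AT=AT^{\sharp_A}$, which says precisely that $T^{\sharp_A}$ is $A$-selfadjoint. To obtain $(T^{\sharp_A})^{\sharp_A}=T^{\sharp_A}$ I would invoke the criterion recorded in the Introduction, namely that an $A$-selfadjoint operator $S$ with $\mathcal{R}(S)\subseteq\overline{\mathcal{R}(A)}$ satisfies $S=S^{\sharp_A}$; since the reduced solution always has range contained in $\overline{\mathcal{R}(A)}$, applying this with $S=T^{\sharp_A}$ closes (i).

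For assertion (ii) I would first record the symmetry $\langle Tx,y\rangle_A=\langle x,Ty\rangle_A$, which follows from $AT=T^{*}A$, together with the fact that $\langle Tz,z\rangle_A=\langle ATz,z\rangle$ is real (and bounded by $\omega_A(T)\|z\|_A^2$) for every $z$. Expanding gives $\langle T(x+y),x+y\rangle_A-\langle T(x-y),x-y\rangle_A=4\,\mathrm{Re}\langle Tx,y\rangle_A$, and combining this with the parallelogram law for $\|\cdot\|_A$ yields $|\mathrm{Re}\langle Tx,y\rangle_A|\le\omega_A(T)$ when $\|x\|_A=\|y\|_A=1$; replacing $y$ by a unimodular rotation upgrades this to $|\langle Tx,y\rangle_A|\le\omega_A(T)$, so $\|T\|_A\le\omega_A(T)$ by the variational formula \eqref{fg}. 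With the general bound $\omega_A(T)\le\|T\|_A$ from \eqref{refine1} this gives $\|T\|_A=\omega_A(T)$, and the equality $r_A(T)=\|T\|_A$ then drops out of (iii), since $r_A(T)=\lim_n\|T^n\|_A^{1/n}=\|T\|_A$ by \eqref{newrad}.

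Assertion (iii) carries the main work, and the crux is the case $n=2$. One cannot simply write $\|T^2\|_A=\|T^{\sharp_A}T\|_A$, because $T\neq T^{\sharp_A}$ in general; the point is that $T^{\sharp_A}T$ and $T^2$ differ only by an operator with range in $\mathcal{N}(A)$. Concretely, $A(T^{\sharp_A}T-T^2)=(AT^{\sharp_A})T-AT^2=(AT)T-AT^2=0$, and since $\|Xx\|_A^2=\langle AXx,Xx\rangle$ is unchanged when $Xx$ is modified by a vector in $\mathcal{N}(A)$, the two operators have equal $A$-seminorm; hence $\|T^2\|_A=\|T^{\sharp_A}T\|_A=\|T\|_A^2$ by \eqref{diez}. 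Because $T^n$ is again $A$-selfadjoint for every $n$ (an immediate induction from $AT=T^{*}A$), iterating the squaring identity gives $\|T^{2^k}\|_A=\|T\|_A^{2^k}$; for arbitrary $n$ I would pick $k$ with $2^k\ge n$ and use \eqref{sousmultiplicative} to write $\|T\|_A^{2^k}=\|T^{2^k}\|_A\le\|T^n\|_A\,\|T\|_A^{2^k-n}$, whence $\|T\|_A^n\le\|T^n\|_A$, the reverse being \eqref{sousmultiplicative} once more. Finally, for the last assertion I would again use the symmetry from (ii) to compute $\langle AT^{2n}x,x\rangle=\langle T^{2n}x,x\rangle_A=\langle T^nx,T^nx\rangle_A=\|T^nx\|_A^2\ge0$, so $AT^{2n}\ge0$, i.e. $T^{2n}\ge_A0$. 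The one genuinely delicate point, and the obstacle I expect a reader to stumble on, is this $n=2$ step: the identification of $\|T^2\|_A$ with $\|T^{\sharp_A}T\|_A$ is false at the operator level and must be routed through the $\mathcal{N}(A)$-argument above, after which the rest is bookkeeping.
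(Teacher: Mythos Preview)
Your argument is correct. The paper does not supply its own proof of this lemma; it simply cites \cite{bakfeki04,feki03} and moves on. Your write-up therefore provides what the paper omits, and each step checks out: the key identity $AT^{\sharp_A}=AT$ for $A$-selfadjoint $T$ is the right starting point, the polarization-plus-parallelogram argument for $\|T\|_A=\omega_A(T)$ is the standard one, and your handling of $\|T^2\|_A=\|T^{\sharp_A}T\|_A$ via the observation $A(T^{\sharp_A}T-T^2)=0$ together with the fact that $\|\cdot\|_A$ is blind to $\mathcal{N}(A)$-perturbations is exactly the right way around the failure of $T=T^{\sharp_A}$. The only expository wrinkle is the forward reference in (ii): you deduce $r_A(T)=\|T\|_A$ from (iii), so the logical flow would be cleaner if (iii) were established first and (ii) completed afterwards; but there is no circularity, since your proof of (iii) and your proof of $\|T\|_A=\omega_A(T)$ are independent of one another.
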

Our first result in this provides a refinement of the first inequality in \eqref{kit} and reads as follows.
\begin{theorem}\label{thm1}
Let $T \in \mathbb{B}_A(\mathcal{H})$. Then
\begin{equation}\label{asli2}
\frac{1}{2}\sqrt{\|T^{\sharp_A} T+TT^{\sharp_A}\|_A}\leq\frac{\sqrt{2}}{2}\sqrt{\left\|\Re_A(T)\right\|_A^2+\left\|\Im_A(T)\right\|_A^2}\le  \omega_A\left(T\right).
\end{equation}
\end{theorem}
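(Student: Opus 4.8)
The plan is to pass from $T$ to its $A$-real and $A$-imaginary parts $B:=\Re_A(T)$ and $C:=\Im_A(T)$, both of which lie in $\mathbb{B}_A(\mathcal H)$ (since $\mathbb{B}_A(\mathcal H)$ is a subalgebra) and are $A$-selfadjoint; this is precisely the regime where the lemma above applies. First I would record the purely algebraic identity
\begin{equation*}
T^{\sharp_A}T+TT^{\sharp_A}=2\big(\Re_A(T)^2+\Im_A(T)^2\big),
\end{equation*}
obtained by squaring $\tfrac{T+T^{\sharp_A}}{2}$ and $\tfrac{T-T^{\sharp_A}}{2i}$ and adding: the terms $T^2$ and $(T^{\sharp_A})^2$ cancel while the cross terms $T^{\sharp_A}T$ and $TT^{\sharp_A}$ each survive with coefficient $\tfrac12$. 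Crucially this is an exact operator equality, so no subtleties concerning $(T^{\sharp_A})^{\sharp_A}\neq T$ enter at this stage.

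For the left inequality I would take $\|\cdot\|_A$ in the identity, apply the triangle inequality for the seminorm, and then invoke part (iii) of the lemma (valid because $B$ and $C$ are $A$-selfadjoint) to write $\|B^2\|_A=\|B\|_A^2$ and $\|C^2\|_A=\|C\|_A^2$. This yields
\begin{equation*}
\|T^{\sharp_A}T+TT^{\sharp_A}\|_A\le 2\big(\|\Re_A(T)\|_A^2+\|\Im_A(T)\|_A^2\big),
\end{equation*}
and dividing by $4$ and taking square roots gives the first inequality in \eqref{asli2}.

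For the right inequality it suffices to prove $\|\Re_A(T)\|_A\le\omega_A(T)$ and $\|\Im_A(T)\|_A\le\omega_A(T)$, since these give $\|\Re_A(T)\|_A^2+\|\Im_A(T)\|_A^2\le 2\omega_A^2(T)$. Here I would first establish the pointwise relation $\langle T^{\sharp_A}x,x\rangle_A=\overline{\langle Tx,x\rangle_A}$ (using $AT^{\sharp_A}=T^*A$ together with the selfadjointness of $A$), from which $\langle \Re_A(T)x,x\rangle_A=\Re\langle Tx,x\rangle_A$ and $\langle \Im_A(T)x,x\rangle_A=\Im\langle Tx,x\rangle_A$. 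Taking moduli and suprema over $\|x\|_A=1$ gives $\omega_A(\Re_A(T))\le\omega_A(T)$ and $\omega_A(\Im_A(T))\le\omega_A(T)$; finally part (ii) of the lemma converts the left-hand numerical radii back into $\|\Re_A(T)\|_A$ and $\|\Im_A(T)\|_A$, completing the proof.

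The obstacles are essentially bookkeeping: confirming that $\Re_A(T)$ and $\Im_A(T)$ are genuinely $A$-selfadjoint so the lemma is applicable, and deriving $\langle T^{\sharp_A}x,x\rangle_A=\overline{\langle Tx,x\rangle_A}$ correctly. The only genuinely delicate point is that $\sharp_A$ is an involution only up to the projection $P_{\overline{\mathcal R(A)}}$, so I would make sure the argument never relies on $(T^{\sharp_A})^{\sharp_A}=T$; the operator identity in the first step sidesteps this issue entirely, which is why I would open with it.
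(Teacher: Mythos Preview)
Your proof is correct and follows essentially the same strategy as the paper: decompose $T$ into its $A$-real and $A$-imaginary parts, use the triangle inequality together with Lemma~\ref{s1}(iii) for the left inequality, and use the pointwise identity $\langle \Re_A(T)x,x\rangle_A=\Re\langle Tx,x\rangle_A$ (and the analogue for $\Im_A$) together with Lemma~\ref{s1}(ii) for the right inequality. The one difference is that you invoke the direct operator identity $T^{\sharp_A}T+TT^{\sharp_A}=2\big(\Re_A(T)^2+\Im_A(T)^2\big)$, whereas the paper instead passes to $(TT^{\sharp_A}+T^{\sharp_A}T)^{\sharp_A}$ and computes it as $2\big([\Re_A(T)]^{\sharp_A}\big)^2+2\big([\Im_A(T)]^{\sharp_A}\big)^2$ before using $\|X^{\sharp_A}\|_A=\|X\|_A$; your route is slightly more streamlined and, as you note, sidesteps any need to control $(T^{\sharp_A})^{\sharp_A}$.
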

\begin{proof}
Observe first that $T$ can be decomposed as $T=\Re_A(T)+i\Im_A(T)$. Further, it is not difficult to verify that $\Re_A(T)$ and $\Im_A(T)$ are $A$-selfadjoint operators. Thus by applying Lemma \ref{s1} (i) we get
$$({[\Re_A(T)]^{\sharp_A}})^{\sharp_A}=[\Re_A(T)]^{\sharp_A}\;\text{ and }\; ({[\Im_A(T)]^{\sharp_A}})^{\sharp_A}=[\Im_A(T)]^{\sharp_A}.$$
So, a short calculations shows that
\begin{align}\label{sharpp}
\frac{1}{2}\left(TT^{\sharp_A} + T^{\sharp_A} T\right)^{\sharp_A}
&=\left([\Re_A(T)]^{\sharp_A}\right)^2+\left([\Im_A(T)]^{\sharp_A}\right)^2.
\end{align}
Thus, one observes that
\begin{align*}
\frac{1}{4}\|T^{\sharp_A} T+TT^{\sharp_A}\|_A
&=\frac{1}{4}\left\|\left(TT^{\sharp_A} + T^{\sharp_A} T\right)^{\sharp_A}\right\|_A\\
&=\frac{1}{2}\left\|\left([\Re_A(T)]^{\sharp_A}\right)^2+\left([\Im_A(T)]^{\sharp_A}\right)^2\right\|_A\\
&\leq\frac{1}{2}\left\|\left([\Re_A(T)]^{\sharp_A}\right)^2\right\|_A+\frac{1}{2}\left\|\left([\Im_A(T)]^{\sharp_A}\right)^2\right\|_A\\
&\leq\frac{1}{2}\left\|\Re_A^2(T)\right\|_A+\frac{1}{2}\left\|\Im_A^2(T)\right\|_A,
\end{align*}
where the last inequality follows from the fact that $\|X^{\sharp_A}\|_A=\|X\|_A$ for all $X\in \mathbb{B}_A(\mathcal{H})$. So, by applying Lemma \ref{s1} (iii), we get
\begin{align}\label{cc3}
\frac{1}{4}\|T^{\sharp_A} T+TT^{\sharp_A}\|_A\leq\frac{1}{2}\left\|\Re_A(T)\right\|_A^2+\frac{1}{2}\left\|\Im_A(T)\right\|_A^2.
\end{align}
On the other hand, let $x\in \mathcal{H}$ be such that $\|x\|_A=1$. Then, we verify that
\begin{align*}
\big|{\langle Tx, x \rangle}_A\big|^2
&= \big|{\langle \left[\Re_A(T)+i\Im_A(T)\right]x, x \rangle}_A\big|^2\\
&= \big|{\langle \Re_A(T)x, x \rangle}_A+i{\langle \Im_A(T)x, x \rangle}_A\big|^2\\
& = \big|{\langle\Re_A(T)x, x\rangle}_A\big|^2 + \big|{\langle \Im_A(T)x, x\rangle}_A\big|^2\\
&\geq\big|{\langle\Re_A(T)x, x\rangle}_A\big|^2.
\end{align*}
So, by taking the supremum over all $x\in \mathcal{H}$ with $\|x\|_A=1$ in the above inequality and then using Lemma \ref{s1} (ii), we obtain
\begin{equation}\label{cc1}
\|\Re_A(T)\|_A^2\leq\omega_A^2(T).
\end{equation}
Similarly, one can prove that
\begin{equation}\label{cc2}
\|\Im_A(T)\|_A^2\leq\omega_A^2(T).
\end{equation}
By combining \eqref{cc3} together with \eqref{cc1} and \eqref{cc2}, we get the desired result. This finishes the proof of the theorem.
\end{proof}

In order to derive  a new improvement of the first inequality in \eqref{kit}, we need the following lemma.
\begin{lemma}\label{immmmm2}
Let $T,S\in \mathbb{B}(\mathcal{H})$ be $A$-selfadjoint operators. Then,
\begin{equation*}
\| T^2+S^2\|_A\leq \frac{1}{2}\left(\|T^2\|_A+\|S^2\|_A+\sqrt{\left(\|T^2\|_A-\|S^2\|_A\right)^2+4\|TS\|_A^2} \right).
\end{equation*}
\end{lemma}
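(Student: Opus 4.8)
The plan is to realize $\|T^2+S^2\|_A$ as the $\mathbb{A}$-seminorm of a $2\times2$ block operator matrix and then dominate that by the operator norm of the $2\times2$ scalar matrix whose entries are the $A$-seminorms of the blocks. I would work on $\mathcal{H}\oplus\mathcal{H}$ equipped with the positive operator $\mathbb{A}=\left(\begin{smallmatrix} A & 0 \\ 0 & A\end{smallmatrix}\right)$, so that $\|(x_1,x_2)\|_{\mathbb{A}}^2=\|x_1\|_A^2+\|x_2\|_A^2$, every block operator $[B_{ij}]$ with $B_{ij}\in\mathbb{B}_A(\mathcal{H})$ lies in $\mathbb{B}_{\mathbb{A}}(\mathcal{H}\oplus\mathcal{H})$, and its $\mathbb{A}$-adjoint is $[B_{ji}^{\sharp_A}]$. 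For $Y=\left(\begin{smallmatrix} T & 0 \\ S & 0\end{smallmatrix}\right)$ one gets $Y^{\sharp_{\mathbb{A}}}Y=\left(\begin{smallmatrix} T^{\sharp_A}T+S^{\sharp_A}S & 0 \\ 0 & 0\end{smallmatrix}\right)$ and $YY^{\sharp_{\mathbb{A}}}=\left(\begin{smallmatrix} TT^{\sharp_A} & TS^{\sharp_A} \\ ST^{\sharp_A} & SS^{\sharp_A}\end{smallmatrix}\right)$, and the identity \eqref{diez} applied in $\mathbb{B}_{\mathbb{A}}(\mathcal{H}\oplus\mathcal{H})$ yields $\|T^{\sharp_A}T+S^{\sharp_A}S\|_A=\|Y\|_{\mathbb{A}}^2=\|YY^{\sharp_{\mathbb{A}}}\|_{\mathbb{A}}$.

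Next I would pass from $T^{\sharp_A}T+S^{\sharp_A}S$ back to $T^2+S^2$. Since $T$ and $S$ are $A$-selfadjoint, $AT=T^*A=AT^{\sharp_A}$, so $A(T-T^{\sharp_A})=0$ and hence $A(T^2-T^{\sharp_A}T)=A(T-T^{\sharp_A})T=0$; likewise $A(S^2-S^{\sharp_A}S)=0$. Thus $(T^2+S^2)-(T^{\sharp_A}T+S^{\sharp_A}S)$ has range inside $\mathcal{N}(A)$, so its $A$-seminorm is $0$, and the triangle inequality for $\|\cdot\|_A$ gives $\|T^2+S^2\|_A=\|T^{\sharp_A}T+S^{\sharp_A}S\|_A=\|YY^{\sharp_{\mathbb{A}}}\|_{\mathbb{A}}$.

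The crux, and the step I expect to be the main obstacle, is the block-operator ``matrix of seminorms'' estimate
\[
\left\|\begin{pmatrix} B_{11} & B_{12}\\ B_{21} & B_{22}\end{pmatrix}\right\|_{\mathbb{A}}\le \left\|\begin{pmatrix}\|B_{11}\|_A & \|B_{12}\|_A\\ \|B_{21}\|_A & \|B_{22}\|_A\end{pmatrix}\right\| .
\]
I would derive it from the two-vector formula \eqref{fg} applied to $\mathbb{A}$: for $\mathbb{A}$-unit vectors $\xi=(x_1,x_2)$ and $\eta=(y_1,y_2)$, expand $\langle \mathbb{B}\xi,\eta\rangle_{\mathbb{A}}=\sum_{i,j}\langle B_{ij}x_j,y_i\rangle_A$, bound each term by the Cauchy--Schwarz inequality for $\langle\cdot,\cdot\rangle_A$ as $|\langle B_{ij}x_j,y_i\rangle_A|\le\|B_{ij}\|_A\|x_j\|_A\|y_i\|_A$, and recognize the resulting sum as $\langle M u,v\rangle$ for the scalar matrix $M$ of seminorms and the Euclidean vectors $u=(\|x_1\|_A,\|x_2\|_A)$, $v=(\|y_1\|_A,\|y_2\|_A)$. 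Since $\|u\|_2=\|\xi\|_{\mathbb{A}}=1$ and $\|v\|_2=\|\eta\|_{\mathbb{A}}=1$, taking the supremum over $\xi,\eta$ produces exactly $\|M\|$.

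Finally I would identify the entries of the matrix of seminorms of $YY^{\sharp_{\mathbb{A}}}$. By \eqref{diez}, $\|TT^{\sharp_A}\|_A=\|T\|_A^2=\|T^2\|_A$ and $\|SS^{\sharp_A}\|_A=\|S^2\|_A$. The same $\mathcal{N}(A)$-argument as above gives $\|TS^{\sharp_A}\|_A=\|TS\|_A$ and $\|ST^{\sharp_A}\|_A=\|ST\|_A$, while $\|ST\|_A=\|(ST)^{\sharp_A}\|_A=\|T^{\sharp_A}S^{\sharp_A}\|_A=\|TS\|_A$, using $(ST)^{\sharp_A}=T^{\sharp_A}S^{\sharp_A}$, $\|X^{\sharp_A}\|_A=\|X\|_A$, and once more that $T^{\sharp_A},S^{\sharp_A}$ agree with $T,S$ modulo $\mathcal{N}(A)$. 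Hence $M=\left(\begin{smallmatrix}\|T^2\|_A & \|TS\|_A\\ \|TS\|_A & \|S^2\|_A\end{smallmatrix}\right)$ is symmetric with nonnegative entries, so $\|M\|$ equals its largest eigenvalue $\tfrac12\big(\|T^2\|_A+\|S^2\|_A+\sqrt{(\|T^2\|_A-\|S^2\|_A)^2+4\|TS\|_A^2}\big)$, which is precisely the asserted bound.
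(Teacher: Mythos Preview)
Your argument is correct and complete; it is close in spirit to the paper's proof (both pass to a $2\times2$ block picture on $\mathcal{H}\oplus\mathcal{H}$ with $\mathbb{A}=\operatorname{diag}(A,A)$ and end up reading off the largest eigenvalue of the same symmetric $2\times2$ scalar matrix), but the key tools are genuinely different. The paper works through the $A$-spectral radius: it uses $\|T^{2}+S^{2}\|_A=r_A(T^{2}+S^{2})$ (since $T^{2}+S^{2}\ge_A0$), factors $\left(\begin{smallmatrix}T^{2}+S^{2}&0\\0&0\end{smallmatrix}\right)$ as a product of two block operators without any $\sharp_A$, applies the commutativity $r_{\mathbb{A}}(XY)=r_{\mathbb{A}}(YX)$, and then invokes the cited inequality $r_{\mathbb{A}}\bigl([B_{ij}]\bigr)\le r\bigl([\|B_{ij}\|_A]\bigr)$ to land on the $2\times2$ scalar matrix. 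You instead stay with seminorms throughout: you use $\|Y^{\sharp_{\mathbb{A}}}Y\|_{\mathbb{A}}=\|YY^{\sharp_{\mathbb{A}}}\|_{\mathbb{A}}$ in place of spectral-radius commutativity, give a self-contained Cauchy--Schwarz proof of the seminorm ``matrix of norms'' bound $\|[B_{ij}]\|_{\mathbb{A}}\le\|[\|B_{ij}\|_A]\|$ in place of the cited spectral-radius version, and handle the extra $\sharp_A$'s via the observation $A(T-T^{\sharp_A})=0$. Your route is more elementary and self-contained (no external spectral-radius lemma is needed); the paper's route avoids the $\sharp_A$-bookkeeping because its factorization never introduces adjoints. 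Either way the final scalar matrix is the same, and since it is symmetric with nonnegative trace its operator norm and its spectral radius coincide with the displayed quantity.
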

\begin{proof}
Since, $T,S\in \mathbb{B}(\mathcal{H})$ are $A$-selfadjoint operators, then by applying Lemma \ref{s1} (vi) one see that $T^2+S^2\geq_A0$. So, by Lemma \ref{s1} (ii) we have
\begin{equation}\label{jj27}
\| T^2+S^2\|_A =r_A\left(T^2+S^2\right).
\end{equation}
On the other hand, it can be checked that
\begin{align*}
r_A(T^2+S^2)
&=r_{\mathbb{A}}\left[\begin{pmatrix}
T^2+S^2&0 \\
0&0
\end{pmatrix}\right]=r_{\mathbb{A}}\left[\begin{pmatrix}
T&S \\
0&0
\end{pmatrix}\begin{pmatrix}
T&0 \\
S&0
\end{pmatrix}\right],
\end{align*}
where $\mathbb{A}=\begin{pmatrix}
A &0\\
0 &A
\end{pmatrix}\in \mathbb{B}(\mathcal{H}\oplus \mathcal{H})$ is the $2\times 2$ positive diagonal operator
matrix whose each diagonal entry is the positive operator $A$. Further, by using \eqref{commut} we get
\begin{align}\label{jj285}
r_A(T^2+S^2)
&=r_{\mathbb{A}}\left[\begin{pmatrix}
T&0 \\
S&0
\end{pmatrix}\begin{pmatrix}
T&S \\
0&0
\end{pmatrix}\right]\nonumber\\
&=r_{\mathbb{A}}\left[\begin{pmatrix}
T^2&TS\\
ST&S^2
\end{pmatrix}\right].
\end{align}
Further, by \cite{fmjom} we have
\begin{equation}\label{jj29}
r_{\mathbb{A}}\left[\begin{pmatrix}
T^2&TS\\
ST&S^2
\end{pmatrix}\right]\leq r\left[\begin{pmatrix}
\|T^2\|_A&\|TS\|_A\\
\|ST\|_A&\|S^2\|_A
\end{pmatrix}\right].
\end{equation}
Therefore, by applying \eqref{jj27} together with \eqref{jj285} and \eqref{jj29} we see that
\begin{align}\label{dd1}
\|T^2+S^2\|_A
&\leq r\left[\begin{pmatrix}
\|T^2\|_A&\|TS\|_A\\
\|ST\|_A&\|S^2\|_A
\end{pmatrix}\right].
\end{align}
Since $T\geq_A0$ and $S\geq_A0$, then $T$ and $S$ are $A$-selfadjoint. This implies, through Lemma \ref{s1} (i) that $T^{\sharp_A}$ and $S^{\sharp_A}$ are also $A$-selfadjoint. Thus, Lemma \ref{s1} (i) gives $(T^{\sharp_A})^{\sharp_A}=T^{\sharp_A}$ and $(S^{\sharp_A})^{\sharp_A}=S^{\sharp_A}$. So, we obtain
$$\|TS\|_A=\|(TS)^{\sharp_A}\|_A=\|S^{\sharp_A}T^{\sharp_A}\|_A=\|(T^{\sharp_A})^{\sharp_A}(S^{\sharp_A})^{\sharp_A}\|_A=\|T^{\sharp_A}S^{\sharp_A}\|_A=\|ST\|_A.$$
Hence, $\begin{pmatrix}
\|T^2\|_A&\|TS\|_A\\
\|ST\|_A&\|S^2\|_A
\end{pmatrix}$ is a symmetric matrix. This yields that
\begin{align*}
 r\left[\begin{pmatrix}
\|T^2\|_A&\|TS\|_A\\
\|ST\|_A&\|S^2\|_A
\end{pmatrix}\right]
&=\frac{1}{2}\left(\|T^2\|_A+\|S^2\|_A+\sqrt{\left(\|T^2\|_A-\|S^2\|_A\right)^2+4\|TS\|_A^2} \right).
\end{align*}
This finishes the proof by taking into consideration \eqref{dd1}.
\end{proof}
Now, we are in a position to prove one of our main results in this paper.
\begin{theorem}\label{thn}
Let $T \in \mathbb{B}_A(\mathcal{H})$. Then
\begin{align*}
&\frac{1}{4}\|T^{\sharp_A} T+TT^{\sharp_A}\|_A\\
&\leq\frac{1}{4}\left(2\omega_A^2(T)+\sqrt{\left(\|\Re_A(T)\|_A^2-\|\Im_A(T)\|_A^2\right)^2+4\|\Re_A(T)\Im_A(T)\|_A^2} \right)\\
&\leq \omega_A^2(T).
\end{align*}
\end{theorem}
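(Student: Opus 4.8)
The plan is to feed the structural identity from the proof of Theorem~\ref{thm1} into Lemma~\ref{immmmm2}. Writing $R=\Re_A(T)$ and $I=\Im_A(T)$ for brevity, the proof of Theorem~\ref{thm1} already records (via \eqref{sharpp} and the invariance $\|X^{\sharp_A}\|_A=\|X\|_A$) the equality
$$\frac{1}{4}\|T^{\sharp_A} T+TT^{\sharp_A}\|_A=\frac{1}{2}\left\|\left(R^{\sharp_A}\right)^2+\left(I^{\sharp_A}\right)^2\right\|_A .$$
Since $R$ and $I$ are $A$-selfadjoint, Lemma~\ref{s1}(i) shows that $R^{\sharp_A}$ and $I^{\sharp_A}$ are $A$-selfadjoint as well, so Lemma~\ref{immmmm2} applies to the pair $\bigl(R^{\sharp_A},I^{\sharp_A}\bigr)$ and bounds the right-hand side.

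The second step is bookkeeping: I would rewrite the three $A$-seminorms produced by Lemma~\ref{immmmm2} back in terms of $R$ and $I$. Using $\|X^{\sharp_A}\|_A=\|X\|_A$ together with Lemma~\ref{s1}(iii) gives $\|(R^{\sharp_A})^2\|_A=\|R\|_A^2$ and $\|(I^{\sharp_A})^2\|_A=\|I\|_A^2$, while the product rule $(XY)^{\sharp_A}=Y^{\sharp_A}X^{\sharp_A}$ and the $A$-selfadjointness of $R,I$ give $\|R^{\sharp_A}I^{\sharp_A}\|_A=\|RI\|_A$ (exactly the manipulation carried out at the end of the proof of Lemma~\ref{immmmm2}). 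Substituting yields
$$\frac{1}{4}\|T^{\sharp_A} T+TT^{\sharp_A}\|_A\leq\frac{1}{4}\Bigl(\|R\|_A^2+\|I\|_A^2+\sqrt{\bigl(\|R\|_A^2-\|I\|_A^2\bigr)^2+4\|RI\|_A^2}\Bigr).$$

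The first claimed inequality is then immediate: \eqref{cc1} and \eqref{cc2} give $\|R\|_A^2+\|I\|_A^2\le 2\omega_A^2(T)$, which replaces the leading sum in the displayed bound. For the second claimed inequality it suffices to show the radicand is at most $4\omega_A^4(T)$. Setting $a=\|R\|_A^2$ and $b=\|I\|_A^2$ and applying the submultiplicativity \eqref{sousmultiplicative} in the form $\|RI\|_A\le\|R\|_A\|I\|_A$, I get $4\|RI\|_A^2\le 4ab$, so that
$$\bigl(\|R\|_A^2-\|I\|_A^2\bigr)^2+4\|RI\|_A^2\le (a-b)^2+4ab=(a+b)^2\le 4\omega_A^4(T),$$
again by $a+b\le 2\omega_A^2(T)$. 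Taking square roots shows the radical is $\le 2\omega_A^2(T)$, which collapses the middle expression to $\omega_A^2(T)$.

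I expect the adjoint bookkeeping of Step~2 to be entirely routine. The one genuinely substantive point is the second inequality: the crude bounds $a,b\le\omega_A^2(T)$ alone control the difference $(a-b)^2$ but not the cross term, and the decisive move is to downgrade $\|RI\|_A$ to $\|R\|_A\|I\|_A$ so that the algebraic identity $(a-b)^2+4ab=(a+b)^2$ turns the entire radicand into a perfect square. Recognizing this cancellation is where the argument really hinges.
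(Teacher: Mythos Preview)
Your proposal is correct and follows essentially the same route as the paper. The only cosmetic difference is that the paper first passes from $\left\|(R^{\sharp_A})^2+(I^{\sharp_A})^2\right\|_A$ to $\left\|R^2+I^2\right\|_A$ and then applies Lemma~\ref{immmmm2} directly to the $A$-selfadjoint pair $(R,I)$, whereas you apply the lemma to $(R^{\sharp_A},I^{\sharp_A})$ and do the $\sharp_A$-bookkeeping afterwards; the second inequality is handled identically in both, via \eqref{sousmultiplicative} and the identity $(a-b)^2+4ab=(a+b)^2$.
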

\begin{proof}
Notice first that $T$ can be written as $T=[\Re_A(T)]+i[\Im_A(T)]$. By using an argument similar to that used in proof of Theorem \ref{thm1}, we get
\begin{align*}
\frac{1}{4}\|T^{\sharp_A} T+TT^{\sharp_A}\|_A
&=\frac{1}{2}\left\|\left([\Re_A(T)]^{\sharp_A}\right)^2+\left([\Im_A(T)]^{\sharp_A}\right)^2\right\|_A\\
&=\frac{1}{2}\left\|\Re_A^2(T)+\Im_A^2(T)\right\|_A,
\end{align*}
Since the operators $\Re_A(T)$ and $\Im_A(T)$ are $A$-selfadjoint, then an application of Lemma \ref{immmmm2} gives
\begin{align*}
&\frac{1}{4}\left\|T^{\sharp_A} T+TT^{\sharp_A}\right\|_A\\
&\leq \frac{1}{4}\left(\|\Re_A^2(T)\|_A+\|\Im_A^2(T)\|_A+\sqrt{\left(\|\Re_A^2(T)\|_A-\|\Im_A^2(T)\|_A\right)^2+4\|\Re_A(T)\Im_A(T)\|_A^2} \right)\\
&= \frac{1}{4}\left(\|\Re_A(T)\|_A^2+\|\Im_A(T)\|_A^2+\sqrt{\left(\|\Re_A(T)\|_A^2-\|\Im_A(T)\|_A^2\right)^2+4\|\Re_A(T)\Im_A(T)\|_A^2} \right),
\end{align*}
where the last equality follows by applying Lemma \ref{s1} (iii). Further, by applying \eqref{cc1} and \eqref{cc2} we see that
\begin{align*}
&\frac{1}{4}\left\|T^{\sharp_A} T+TT^{\sharp_A}\right\|_A\\
&\leq\frac{1}{4}\left(2\omega_A^2(T)+\sqrt{\left(\|\Re_A(T)\|_A^2-\|\Im_A(T)\|_A^2\right)^2+4\|\Re_A(T)\Im_A(T)\|_A^2} \right).
\end{align*}
This proves the first inequality in Theorem \ref{thn}. Now, by using \eqref{sousmultiplicative} and then making simple calculations we get
\begin{align*}
&\frac{1}{4}\left(2\omega_A^2(T)+\sqrt{\left(\|\Re_A(T)\|_A^2-\|\Im_A(T)\|_A^2\right)^2+4\|\Re_A(T)\Im_A(T)\|_A^2} \right)\\
&\leq \frac{1}{4}\left(2\omega_A^2(T)+\sqrt{\left(\|\Re_A(T)\|_A^2-\|\Im_A(T)\|_A^2\right)^2+4\|\Re_A(T)\|_A^2\|\Im_A(T)\|_A^2} \right)\\
&=\frac{1}{4}\left(2\omega_A^2(T)+\sqrt{\left(\|\Re_A(T)\|_A^2+\|\Im_A(T)\|_A^2\right)^2} \right)\\
&=\frac{1}{4}\left(2\omega_A^2(T)+\|\Re_A(T)\|_A^2+\|\Im_A(T)\|_A^2 \right)\\
&\leq\frac{1}{4}\left(2\omega_A^2(T)+2\omega_A^2(T)\right)=\omega_A^2(T).
\end{align*}
This finished the proof of the theorem.
\end{proof}
The following lemma plays a crucial role in proving our next result and provides a new refinement of the triangle inequality related to $\|\cdot\|_A$.
\begin{lemma}\label{int}
Let $T\in\mathbb{B}_{A^{1/2}}(\mathcal{H})$. Then
\begin{align*}
\|T+S\|_A
&\le\int_{0}^1\left\|\lambda T+(1-\lambda)\frac{T+S}{2}\right\|_Ad\lambda+\int_{0}^1\left\|\lambda S+(1-\lambda)\frac{T+S}{2}\right\|_Ad\lambda\\
&\le \|T\|_A+\|S\|_A.
\end{align*}
\end{lemma}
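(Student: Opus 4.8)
The plan is to handle both inequalities pointwise in the integration variable $\lambda$ and then integrate, the whole argument hinging on one algebraic observation. Writing $M := \tfrac{T+S}{2}$ for brevity and setting $X_\lambda := \lambda T + (1-\lambda)M$ and $Y_\lambda := \lambda S + (1-\lambda)M$ (so that the two integrands are exactly $\|X_\lambda\|_A$ and $\|Y_\lambda\|_A$), the crucial point is that
\[
X_\lambda + Y_\lambda = \lambda(T+S) + 2(1-\lambda)M = \lambda(T+S)+(1-\lambda)(T+S) = T+S
\]
for \emph{every} $\lambda\in[0,1]$, independently of $\lambda$. This identity is what drives everything.

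For the first (lower) inequality, I would apply the triangle inequality for $\|\cdot\|_A$ on $\mathbb{B}_{A^{1/2}}(\mathcal{H})$ to the fixed pair $X_\lambda, Y_\lambda$: since $X_\lambda+Y_\lambda=T+S$, we get $\|T+S\|_A \le \|X_\lambda\|_A + \|Y_\lambda\|_A$ for each $\lambda$. Integrating this inequality over $\lambda\in[0,1]$ and using that $\int_0^1\|T+S\|_A\,d\lambda = \|T+S\|_A$ produces exactly the first inequality. (One should note in passing that $\lambda\mapsto\|X_\lambda\|_A$ is continuous on $[0,1]$ by the reverse triangle inequality, so the integrals are well defined.)

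For the second (upper) inequality, I would instead invoke the convexity of the seminorm $\|\cdot\|_A$. As $X_\lambda$ is the convex combination $\lambda T + (1-\lambda)M$, convexity gives $\|X_\lambda\|_A \le \lambda\|T\|_A + (1-\lambda)\|M\|_A$, and integrating yields $\int_0^1\|X_\lambda\|_A\,d\lambda \le \tfrac12\|T\|_A + \tfrac12\|M\|_A$; the analogous bound holds for the $Y_\lambda$-integral with $S$ in place of $T$. Adding the two and then using $\|M\|_A = \tfrac12\|T+S\|_A \le \tfrac12(\|T\|_A+\|S\|_A)$ (the triangle inequality once more) collapses the right-hand side to $\|T\|_A+\|S\|_A$.

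There is no genuine obstacle here: the only step that requires care is verifying the identity $X_\lambda + Y_\lambda = T+S$, and recognizing that it is precisely this $\lambda$-independence that keeps the pointwise triangle inequality tight enough to survive integration and yield a sandwich between $\|T+S\|_A$ and $\|T\|_A+\|S\|_A$. Everything else is the interplay of the triangle inequality (for the lower bound and for bounding $\|M\|_A$) with convexity of the seminorm (for the upper bound).
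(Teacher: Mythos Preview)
Your proof is correct and follows essentially the same route as the paper: the paper also observes the identity $X_\lambda+Y_\lambda=T+S$, applies the triangle inequality pointwise for each $\lambda$, and integrates to obtain the first inequality, while the second is obtained (as you do) by bounding each integrand via the triangle inequality/convexity of $\|\cdot\|_A$ and then using $\|M\|_A\le\tfrac12(\|T\|_A+\|S\|_A)$.
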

\begin{proof}
Let $\lambda\in [0,1]$. Then, one observes that
\begin{align*}
\|T+S\|_A
&=\left\|\lambda T+(1-\lambda)\frac{T+S}{2}+\lambda S+(1-\lambda)\frac{T+S}{2}\right\|_A\\
&\le \left\|\lambda T+(1-\lambda)\frac{T+S}{2}\right\|_A+\left\|\lambda S+(1-\lambda)\frac{T+S}{2}\right\|_A.
\end{align*}
This proves the first inequality in Lemma \ref{int} by taking integral over $\lambda\in [0,1]$. The second inequality in Lemma \ref{int} follows immediately by applying the triangle inequality related to $\|\cdot\|_A$ and then making simple calculations.
\end{proof}
Another improvement of the first inequality in \eqref{kit} can be stated as follows.
\begin{theorem}\label{thm3}
Let $T\in\mathbb{B}_A(\mathcal{H})$. Then
\begin{align*}
\frac{1}{4}\|T^{\sharp_A} T+TT^{\sharp_A}\|_A
&\le \frac{1}{2}\int_{0}^1\left\|\lambda \Re_A^2(T)+(1-\lambda)\frac{\Re_A^2(T)+\Im_A^2(T)}{2}\right\|_Ad\lambda\\
&\quad\quad+\frac{1}{2}\int_{0}^1\left\|\lambda \Im_A^2(T)+(1-\lambda)\frac{\Re_A^2(T)+\Im_A^2(T)}{2}\right\|_Ad\lambda\\
&\le \omega_A^{2}\left( T\right).
\end{align*}
\end{theorem}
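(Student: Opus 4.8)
The plan is to reduce the two-sided estimate to a direct application of Lemma~\ref{int}, exactly as Theorem~\ref{thn} reduces to Lemma~\ref{immmmm2}. The starting point is the identity already established in the proofs of Theorems~\ref{thm1} and~\ref{thn}, namely
$$\frac{1}{4}\|T^{\sharp_A} T+TT^{\sharp_A}\|_A=\frac{1}{2}\left\|\Re_A^2(T)+\Im_A^2(T)\right\|_A.$$
This recasts the left-hand side as one half of a seminorm of a sum of two operators, so the whole statement becomes a refined triangle inequality for the pair $\Re_A^2(T)$ and $\Im_A^2(T)$.

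Next I would set $U:=\Re_A^2(T)$ and $V:=\Im_A^2(T)$ and apply Lemma~\ref{int} to $U+V$. Before doing so one must check that $U,V\in\mathbb{B}_{A^{1/2}}(\mathcal{H})$: since $T\in\mathbb{B}_A(\mathcal{H})$ and $\mathbb{B}_A(\mathcal{H})$ is a subalgebra, both $\Re_A(T)$ and $\Im_A(T)$ lie in $\mathbb{B}_A(\mathcal{H})\subseteq\mathbb{B}_{A^{1/2}}(\mathcal{H})$, whence their squares $U,V$ belong to $\mathbb{B}_{A^{1/2}}(\mathcal{H})$ as well. The lower bound in Lemma~\ref{int}, applied to $U+V$ and multiplied by $\tfrac12$, reproduces verbatim the two integral terms appearing in the middle of the asserted chain; combined with the identity above this yields the first inequality of the theorem.

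For the second inequality I would use the upper bound of Lemma~\ref{int}, which gives $\|U+V\|_A\le\|U\|_A+\|V\|_A$, so the middle expression is at most $\tfrac12\bigl(\|\Re_A^2(T)\|_A+\|\Im_A^2(T)\|_A\bigr)$. Since $\Re_A(T)$ and $\Im_A(T)$ are $A$-selfadjoint, Lemma~\ref{s1}(iii) converts $\|\Re_A^2(T)\|_A$ into $\|\Re_A(T)\|_A^2$ and likewise for the imaginary part; the estimates \eqref{cc1} and \eqref{cc2} then bound each square by $\omega_A^2(T)$, producing $\tfrac12\bigl(\omega_A^2(T)+\omega_A^2(T)\bigr)=\omega_A^2(T)$.

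The argument carries essentially no obstacle, since the substantive work resides in the already-proved identity and in Lemma~\ref{int}; the only points demanding attention are the algebra-membership check that legitimizes applying Lemma~\ref{int} to the squared operators, and the bookkeeping of the overall factor $\tfrac12$ so that the integral terms match the statement exactly.
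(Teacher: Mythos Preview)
Your proposal is correct and follows essentially the same route as the paper: establish the identity $\tfrac14\|T^{\sharp_A}T+TT^{\sharp_A}\|_A=\tfrac12\|\Re_A^2(T)+\Im_A^2(T)\|_A$, apply both inequalities of Lemma~\ref{int} with $U=\Re_A^2(T)$ and $V=\Im_A^2(T)$, then use Lemma~\ref{s1}(iii) and the bounds \eqref{cc1}--\eqref{cc2}. The only addition you make is the explicit verification that $U,V\in\mathbb{B}_{A^{1/2}}(\mathcal{H})$, which the paper leaves implicit.
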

\begin{proof}
Since $T=\Re_A(T)+i\Im_A(T)$, then by using an argument similar to that used in proof of Theorem \ref{thm1}, we get
\begin{align*}
\frac{1}{4}\|T^{\sharp_A} T+TT^{\sharp_A}\|_A
&=\frac{1}{2}\left\|\left([\Re_A(T)]^{\sharp_A}\right)^2+\left([\Im_A(T)]^{\sharp_A}\right)^2\right\|_A\\
&=\frac{1}{2}\left\|[\Re_A^2(T)]^{\sharp_A}+[\Im_A^2(T)]^{\sharp_A}\right\|_A\\
&=\frac{1}{2}\left\|\Re_A^2(T)+\Im_A^2(T)\right\|_A.
\end{align*}
So, by applying Lemma \ref{int}, we see that
\begin{align*}
\frac{1}{4}\|T^{\sharp_A} T+TT^{\sharp_A}\|_A
&\le \frac{1}{2}\int_{0}^1\left\|\lambda \Re_A^2(T)+(1-\lambda)\frac{\Re_A^2(T)+\Im_A^2(T)}{2}\right\|_Ad\lambda\\
&\quad\quad+\frac{1}{2}\int_{0}^1\left\|\lambda \Im_A^2(T)+(1-\lambda)\frac{\Re_A^2(T)+\Im_A^2(T)}{2}\right\|_Ad\lambda\\
&\leq\frac{1}{2}\left(\|\Re_A^2(T)\|_A+\|\Im_A^2(T)\|_A\right)\\
&=\frac{1}{2}\left(\|\Re_A(T)\|_A^2+\|\Im_A(T)\|_A^2\right)\quad(\text{by Lemma } \ref{s1} \text{(iii)})\\
&\leq \omega_A^{2}\left( T\right),
\end{align*}
where the last inequality follows by applying \eqref{cc1} together with \eqref{cc2}. Hence the proof is complete.
\end{proof}

In the following theorem, we prove another refinement of the triangle inequality related to $\|\cdot\|_A$.
\begin{theorem}\label{refan}
Let $T,S\in\mathbb{B}_{A}(\mathcal{H})$. Then
\begin{equation}
\left\| T\pm S\right\|_A\leq\sqrt{\|T^{\sharp_A}T+S^{\sharp_A}S\|_A+\|T^{\sharp_A}S+S^{\sharp_A}T\|_A}\leq \|T\|_A+\|S\|_A.  \label{2.7tw}
\end{equation}
\end{theorem}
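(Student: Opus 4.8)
The plan is to reduce the left-hand inequality in \eqref{2.7tw} to a pointwise estimate on unit vectors and then pass to the supremum via the variational formula \eqref{fg} for $\|\cdot\|_A$. Fix $x\in\mathcal{H}$ with $\|x\|_A=1$ and set $W=T+S\in\mathbb{B}_A(\mathcal{H})$. Using the defining relation of the $A$-adjoint (so that $\|Wx\|_A^2=\langle W^{\sharp_A}Wx,x\rangle_A$, legitimate since $W^{\sharp_A}W$ is $A$-positive hence $A$-selfadjoint) together with the additivity $W^{\sharp_A}=T^{\sharp_A}+S^{\sharp_A}$, I would expand
\begin{align*}
\|(T+S)x\|_A^2
&=\langle (T+S)^{\sharp_A}(T+S)x,x\rangle_A\\
&=\langle (T^{\sharp_A}T+S^{\sharp_A}S)x,x\rangle_A+\langle (T^{\sharp_A}S+S^{\sharp_A}T)x,x\rangle_A.
\end{align*}
The first summand is nonnegative because $T^{\sharp_A}T+S^{\sharp_A}S$ is $A$-positive, and it is dominated by $\|T^{\sharp_A}T+S^{\sharp_A}S\|_A$; the second is bounded in modulus by $\omega_A(T^{\sharp_A}S+S^{\sharp_A}T)\le\|T^{\sharp_A}S+S^{\sharp_A}T\|_A$ thanks to the right estimate in \eqref{refine1}. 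Taking the supremum over all such $x$ and invoking \eqref{fg} yields $\|T+S\|_A^2\le\|T^{\sharp_A}T+S^{\sharp_A}S\|_A+\|T^{\sharp_A}S+S^{\sharp_A}T\|_A$.

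To handle the sign $T-S$ at once, I would replace $S$ by $-S$ and use $(-S)^{\sharp_A}=-S^{\sharp_A}$: the term $T^{\sharp_A}T+S^{\sharp_A}S$ is unchanged, while the cross term $T^{\sharp_A}S+S^{\sharp_A}T$ merely changes sign, so its $A$-seminorm is unaffected. Hence both $\|T+S\|_A$ and $\|T-S\|_A$ satisfy the same bound, which establishes the left-hand inequality of \eqref{2.7tw} after taking square roots.

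For the right-hand inequality I would estimate each seminorm on the right by the triangle inequality for $\|\cdot\|_A$ and then apply \eqref{diez} and the submultiplicativity \eqref{sousmultiplicative}. Explicitly, $\|T^{\sharp_A}T+S^{\sharp_A}S\|_A\le\|T\|_A^2+\|S\|_A^2$ by \eqref{diez}, while $\|T^{\sharp_A}S+S^{\sharp_A}T\|_A\le\|T^{\sharp_A}\|_A\|S\|_A+\|S^{\sharp_A}\|_A\|T\|_A=2\|T\|_A\|S\|_A$, using $\|T^{\sharp_A}\|_A=\|T\|_A$ from \eqref{diez}. Adding these produces $(\|T\|_A+\|S\|_A)^2$ under the radical, and taking the square root completes the argument.

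The only genuinely delicate point is bookkeeping rather than conceptual: one must resist claiming that the cross term $T^{\sharp_A}S+S^{\sharp_A}T$ is $A$-selfadjoint, since in general $(T^{\sharp_A})^{\sharp_A}=P_{\overline{\mathcal{R}(A)}}TP_{\overline{\mathcal{R}(A)}}$ rather than $T$. This is precisely why the cross term must be controlled through the inequality $\omega_A(\cdot)\le\|\cdot\|_A$ of \eqref{refine1}, instead of the equality $\omega_A=\|\cdot\|_A$ of Lemma \ref{s1}(ii), which is available only for $A$-selfadjoint operators. Alongside this, the additivity $(T+S)^{\sharp_A}=T^{\sharp_A}+S^{\sharp_A}$ and the behaviour $(-S)^{\sharp_A}=-S^{\sharp_A}$ are the structural facts that make the unified treatment of both signs possible.
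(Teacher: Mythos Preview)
Your argument is correct and follows essentially the same route as the paper: a pointwise expansion of $\|(T\pm S)x\|_A^2$ on $A$-unit vectors into the $A$-positive part $\langle(T^{\sharp_A}T+S^{\sharp_A}S)x,x\rangle_A$ plus the cross term $\langle(T^{\sharp_A}S+S^{\sharp_A}T)x,x\rangle_A$, each bounded by the corresponding $A$-seminorm, followed by a supremum; the right-hand inequality is obtained in both cases from the triangle inequality together with \eqref{diez} and \eqref{sousmultiplicative}. The only cosmetic differences are that the paper expands $\|Tx+Sx\|_A^2$ directly (invoking ``Cauchy--Schwarz'' where you invoke \eqref{refine1}) and repeats the computation for $T-S$, whereas you reach the same expansion via $(T+S)^{\sharp_A}(T+S)$ and handle the second sign by the substitution $S\mapsto -S$.
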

\begin{proof}
Let $x\in \mathcal{H}$ be such that $\|x\|_A=1$. Then we see that
\begin{align*}
\left\| Tx+Sx\right\|_A ^{2}
& =\left\| Tx\right\|_A ^{2}+\left\langle Tx, Sx\right\rangle_A+\left\langle Sx, Tx\right\rangle_A +\left\| Sx\right\|_A ^{2} \\
&\leq \left\langle \left( T^{\sharp_A}T+S^{\sharp_A}S\right) x, x\right\rangle_A
+\left|\left\langle \left(T^{\sharp_A}S+S^{\sharp_A}T\right) x, x\right\rangle_A\right|.
\end{align*}
Further, by applying the Cauchy-Schwarz inequality we get
\begin{align*}
\left\| Tx+Sx\right\|_A ^{2}
& \leq \|T^{\sharp_A}T+S^{\sharp_A}S\|_A+\|T^{\sharp_A}S+S^{\sharp_A}T\|_A.
\end{align*}%
 So, by taking the supremum over all $x\in \mathcal{H}$ with $\left\| x\right\|_A =1$ in the above inequality we get%
\begin{align*}
\left\|T+S\right\|_A ^{2}
&\leq \|T^{\sharp_A}T+S^{\sharp_A}S\|_A+\|T^{\sharp_A}S+S^{\sharp_A}T\|_A.
\end{align*}%
Similarly, we show that
\begin{align*}
\left\|T-S\right\|_A ^{2}
&\leq \|T^{\sharp_A}T+S^{\sharp_A}S\|_A+\|T^{\sharp_A}S+S^{\sharp_A}T\|_A.
\end{align*}%
Hence, we get the first inequality in \eqref{2.7tw}. Moreover, by applying the triangle inequality together with \eqref{sousmultiplicative} and \eqref{diez} we see that
\begin{align*}
\left\| T\pm S\right\|_A
&\leq\sqrt{\|T^{\sharp_A}T+S^{\sharp_A}S\|_A+\|T^{\sharp_A}S+S^{\sharp_A}T\|_A}\\
&\leq \sqrt{\|T\|_A^2+\|S\|_A^2+2\|T\|_A\|S\|_A}\\
&=\sqrt{\left(\|T\|_A+\|S\|_A\right)^2}\\
&= \|T\|_A+\|S\|_A.
\end{align*}%
This immediately proves the required result.
\end{proof}
\begin{remark}
We note that the inequalities obtained in Theorem \ref{refan} cover and refine the recent inequalities due to Bhunia et al. (see \cite[Theorem 2.4]{parkiv}.
\end{remark}
As an application of Theorem \ref{refan}, we derive another improvement of the first inequality in \eqref{kit}.
\begin{corollary}\label{corf}
Let $T\in\mathbb{B}_A(\mathcal{H})$. Then
\begin{equation*}
\frac{1}{4}\|T^{\sharp_A} T+TT^{\sharp_A}\|_A\leq\frac{\sqrt{2}}{2}\sqrt{\omega_A^4(T)+\big\|\Im_A^2(T)\Re_A^2(T)\big\|_A}\le \omega_A^{2}(T).
\end{equation*}
\end{corollary}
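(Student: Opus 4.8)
The plan is to derive \Cref{corf} from \Cref{refan} by applying the latter to the squares of the $A$-selfadjoint parts of $T$. The starting point is the identity already obtained in the proof of \Cref{thm1}, namely
\[
\frac{1}{4}\|T^{\sharp_A} T+TT^{\sharp_A}\|_A=\frac{1}{2}\left\|\big([\Re_A(T)]^{\sharp_A}\big)^2+\big([\Im_A(T)]^{\sharp_A}\big)^2\right\|_A .
\]
Writing $P:=[\Re_A(T)]^{\sharp_A}$ and $Q:=[\Im_A(T)]^{\sharp_A}$, I note that since $\Re_A(T)$ and $\Im_A(T)$ are $A$-selfadjoint, Lemma~\ref{s1}(i) yields that $P,Q$ are $A$-selfadjoint and, crucially, that $P^{\sharp_A}=P$ and $Q^{\sharp_A}=Q$; hence $(P^2)^{\sharp_A}=P^2$ and $(Q^2)^{\sharp_A}=Q^2$. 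Thus the whole problem reduces to estimating $\tfrac12\|P^2+Q^2\|_A$.

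The key step is to invoke \Cref{refan} with the substitution $T\mapsto P^2$, $S\mapsto Q^2$. Because $(P^2)^{\sharp_A}=P^2$ and $(Q^2)^{\sharp_A}=Q^2$, the first inequality in \eqref{2.7tw} simplifies to
\[
\|P^2+Q^2\|_A^2\le \|P^4+Q^4\|_A+\|P^2Q^2+Q^2P^2\|_A ,
\]
and I would bound the two summands separately. For the diagonal term, the triangle inequality together with Lemma~\ref{s1}(iii) and $\|X^{\sharp_A}\|_A=\|X\|_A$ gives $\|P^4+Q^4\|_A\le \|P\|_A^4+\|Q\|_A^4=\|\Re_A(T)\|_A^4+\|\Im_A(T)\|_A^4$, which is at most $2\omega_A^4(T)$ by \eqref{cc1} and \eqref{cc2}.

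The main obstacle is the cross term, which I expect must collapse to $2\|\Im_A^2(T)\Re_A^2(T)\|_A$. Here I would use the anti-multiplicativity $(XY)^{\sharp_A}=Y^{\sharp_A}X^{\sharp_A}$ to write $P^2=\big([\Re_A(T)]^{\sharp_A}\big)^2=[\Re_A^2(T)]^{\sharp_A}$ and $Q^2=[\Im_A^2(T)]^{\sharp_A}$, so that $P^2Q^2=[\Im_A^2(T)\Re_A^2(T)]^{\sharp_A}$ and $Q^2P^2=[\Re_A^2(T)\Im_A^2(T)]^{\sharp_A}$; since $\|Z^{\sharp_A}\|_A=\|Z\|_A$, this gives $\|P^2Q^2\|_A=\|\Im_A^2(T)\Re_A^2(T)\|_A$ and $\|Q^2P^2\|_A=\|\Re_A^2(T)\Im_A^2(T)\|_A$. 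The equality of these two numbers is precisely the commutation identity $\|XY\|_A=\|YX\|_A$ valid for $A$-selfadjoint $X,Y$, which was established inside the proof of Lemma~\ref{immmmm2} (applied to the $A$-selfadjoint operators $\Re_A^2(T)$ and $\Im_A^2(T)$). The triangle inequality on $\|P^2Q^2+Q^2P^2\|_A$ then bounds the cross term by $2\|\Im_A^2(T)\Re_A^2(T)\|_A$.

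Combining the two estimates produces $\|P^2+Q^2\|_A^2\le 2\omega_A^4(T)+2\|\Im_A^2(T)\Re_A^2(T)\|_A$; taking square roots and multiplying by $\tfrac12$, and recalling $\tfrac14\|T^{\sharp_A}T+TT^{\sharp_A}\|_A=\tfrac12\|P^2+Q^2\|_A$, yields the first inequality of \Cref{corf}. Finally, the second inequality is immediate: by submultiplicativity \eqref{sousmultiplicative}, Lemma~\ref{s1}(iii), and \eqref{cc1}–\eqref{cc2} we have $\|\Im_A^2(T)\Re_A^2(T)\|_A\le \|\Im_A(T)\|_A^2\,\|\Re_A(T)\|_A^2\le \omega_A^4(T)$, whence $\tfrac{\sqrt2}{2}\sqrt{\omega_A^4(T)+\|\Im_A^2(T)\Re_A^2(T)\|_A}\le \tfrac{\sqrt2}{2}\sqrt{2\omega_A^4(T)}=\omega_A^2(T)$.
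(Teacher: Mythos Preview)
Your proposal is correct and follows essentially the same route as the paper: both start from the identity $\tfrac14\|T^{\sharp_A}T+TT^{\sharp_A}\|_A=\tfrac12\|[\Re_A^2(T)]^{\sharp_A}+[\Im_A^2(T)]^{\sharp_A}\|_A$, apply \Cref{refan} to the pair $[\Re_A^2(T)]^{\sharp_A}$, $[\Im_A^2(T)]^{\sharp_A}$ (your $P^2,Q^2$), bound the diagonal term via the triangle inequality and Lemma~\ref{s1}(iii), and collapse the cross term to $2\|\Im_A^2(T)\Re_A^2(T)\|_A$. The only cosmetic difference is that the paper proves the equality $\|[\Re_A^2(T)]^{\sharp_A}[\Im_A^2(T)]^{\sharp_A}\|_A=\|[\Im_A^2(T)]^{\sharp_A}[\Re_A^2(T)]^{\sharp_A}\|_A$ directly from the identities \eqref{1j}, whereas you obtain it by citing the commutation identity from the proof of Lemma~\ref{immmmm2}; both justifications are valid.
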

\begin{proof}
Since $T=\Re_A(T)+i\Im_A(T)$, then by using \eqref{sharpp} we observe that
\begin{align*}
\frac{1}{4}\|T^{\sharp_A} T+TT^{\sharp_A}\|_A
&=\frac{1}{2}\left\|\left[\Re_A^2(T)\right]^{\sharp_A}+\left[\Im_A^2(T)\right]^{\sharp_A}\right\|_A.
\end{align*}
Moreover, in view of Lemma \ref{s1} (vi), the operators $\Re_A^2(T)$ and $\Im_A^2(T)$ are $A$-positive and
\begin{equation}\label{1j}
\left[\left[\Re_A^2(T)\right]^{\sharp_A}\right]^{\sharp_A}=\left[\Re_A^2(T)\right]^{\sharp_A}\;\;\text{ and }\;\; \left[\left[\Im_A^2(T)\right]^{\sharp_A}\right]^{\sharp_A}=\left[\Im_A^2(T)\right]^{\sharp_A}.
\end{equation}
So, by using Theorem \ref{refan} together with \eqref{1j} we observe that
\begin{align*}
&\frac{1}{16}\|T^{\sharp_A} T+TT^{\sharp_A}\|_A^2\\
&\leq\frac{1}{4}\big\|\big(\left[\Re_A^2(T)\right]^{\sharp_A}\big)^2+\big(\left[\Im_A^2(T)\right]^{\sharp_A}\big)^2\big\|_A\\
&\quad\quad\quad\quad+\frac{1}{4}\big\|\left[\Re_A^2(T)\right]^{\sharp_A}\left[\Im_A^2(T)\right]^{\sharp_A}+\left[\Im_A^2(T)\right]^{\sharp_A}\left[\Re_A^2(T)\right]^{\sharp_A}\big\|_A\\
&\leq\frac{1}{4}\big\|\big(\left[\Re_A^2(T)\right]^{\sharp_A}\big)^2+\big(\left[\Im_A^2(T)\right]^{\sharp_A}\big)^2\big\|_A\\
&\quad\quad\quad\quad+\frac{1}{4}\big\|\left[\Re_A^2(T)\right]^{\sharp_A}\left[\Im_A^2(T)\right]^{\sharp_A}\big\|_A+\frac{1}{4}\big\|\left[\Im_A^2(T)\right]^{\sharp_A}\left[\Re_A^2(T)\right]^{\sharp_A}\big\|_A.
\end{align*}
On the other hand, since $\|X\|_A=\|X^{\sharp_A}\|_A$ for all $X\in\mathbb{B}_A(\mathcal{H})$, then by \eqref{1j} we have
\begin{align*}
\big\|\left[\Re_A^2(T)\right]^{\sharp_A}\left[\Im_A^2(T)\right]^{\sharp_A}\big\|_A
&=\left\|\left[\left[\Im_A^2(T)\right]^{\sharp_A}\right]^{\sharp_A}\left[\left[\Re_A^2(T)\right]^{\sharp_A}\right]^{\sharp_A}\right\|_A\\
&=\big\|\left[\Im_A^2(T)\right]^{\sharp_A}\left[\Re_A^2(T)\right]^{\sharp_A}\big\|_A.
\end{align*}
Hence, we obtain
\begin{align*}
&\frac{1}{16}\|T^{\sharp_A} T+TT^{\sharp_A}\|_A^2\\
&\leq\frac{1}{4}\big\|\big(\left[\Re_A^2(T)\right]^{\sharp_A}\big)^2+\big(\left[\Im_A^2(T)\right]^{\sharp_A}\big)^2\big\|_A+\frac{1}{2}\big\|\left[\Re_A^2(T)\right]^{\sharp_A}\left[\Im_A^2(T)\right]^{\sharp_A}\big\|_A\\
&\leq\frac{1}{4}\big\|\Re_A^4(T)+\Im_A^4(T)\big\|_A+\frac{1}{2}\big\|\Im_A^2(T)\Re_A^2(T)\big\|_A.
\end{align*}
This implies that
\begin{align*}
\frac{1}{4}\|T^{\sharp_A} T+TT^{\sharp_A}\|_A
&\leq\frac{1}{2}\sqrt{\big\|\Re_A^4(T)+\Im_A^4(T)\big\|_A+2\big\|\Im_A^2(T)\Re_A^2(T)\big\|_A}\\
&\leq\frac{1}{2}\sqrt{\big\|\Re_A^4(T)\big\|_A+\big\|\Im_A^4(T)\big\|_A+2\big\|\Im_A^2(T)\Re_A^2(T)\big\|_A}\\
&=\frac{1}{2}\sqrt{\big\|\Re_A(T)\big\|_A^4+\big\|\Im_A(T)\big\|_A^4+2\big\|\Im_A^2(T)\Re_A^2(T)\big\|_A},
\end{align*}
where the last equality follows from Lemma \ref{s1} (iii) since $\Re_A(T)$ and $\Im_A(T)$ are $A$-selfadjoint operators. By using \eqref{cc1} and \eqref{cc2} we obtain
\begin{align*}
\frac{1}{4}\|T^{\sharp_A} T+TT^{\sharp_A}\|_A
&\leq\frac{\sqrt{2}}{2}\sqrt{\omega_A^4(T)+\big\|\Im_A^2(T)\Re_A^2(T)\big\|_A}.
\end{align*}
This shows the first inequality in Corollary \ref{corf}. Finally, by using \eqref{sousmultiplicative} and similar arguments as above, we see that
\begin{align*}
\frac{\sqrt{2}}{2}\sqrt{\omega_A^4(T)+\big\|\Im_A^2(T)\Re_A^2(T)\big\|_A}
&\leq\frac{\sqrt{2}}{2}\sqrt{\omega_A^4(T)+\big\|\Re_A(T)\big\|_A^2\big\|\Im_A(T)\big\|_A^2}\\
&\leq\frac{\sqrt{2}}{2}\sqrt{\omega_A^4(T)+\omega_A^4(T)}=\omega_A^2(T).
\end{align*}
Hence, the proof is complete.
\end{proof}

In order to prove our second main result in this paper, we need the following paper.
\begin{lemma}\label{drog0}
Let $T,S\in \mathbb{B}_A(\mathcal{H})$. Then
\[\left\|T\pm S\right\|_A\le \sqrt{\left\|TT^{\sharp_A}+SS^{\sharp_A}\right\|_A+2\omega_A(TS^{\sharp_A})}\leq \|T\|_A+\|S\|_A.\]
\end{lemma}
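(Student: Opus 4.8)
The plan is to mirror the proof of Theorem \ref{refan}, but to work with the $A$-adjoints $T^{\sharp_A}$ and $S^{\sharp_A}$ rather than with $T$ and $S$ themselves; this is precisely what turns the products $T^{\sharp_A}T$, $S^{\sharp_A}S$ appearing in Theorem \ref{refan} into the products $TT^{\sharp_A}$, $SS^{\sharp_A}$ demanded here. The starting observation is that $(T\pm S)^{\sharp_A}=T^{\sharp_A}\pm S^{\sharp_A}$ together with the identity $\|X\|_A=\|X^{\sharp_A}\|_A$ (valid for every $X\in\mathbb{B}_A(\mathcal{H})$) gives $\|T\pm S\|_A=\|T^{\sharp_A}\pm S^{\sharp_A}\|_A$. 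The two computational facts I will repeatedly use are $\langle TT^{\sharp_A}x,x\rangle_A=\|T^{\sharp_A}x\|_A^2$ and $\langle S^{\sharp_A}x,T^{\sharp_A}x\rangle_A=\langle TS^{\sharp_A}x,x\rangle_A$, both immediate from the defining relation $\langle Tu,v\rangle_A=\langle u,T^{\sharp_A}v\rangle_A$ of the $A$-adjoint.

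For the first inequality I would fix $x\in\mathcal{H}$ with $\|x\|_A=1$ and expand
\[
\|T^{\sharp_A}x\pm S^{\sharp_A}x\|_A^2=\|T^{\sharp_A}x\|_A^2+\|S^{\sharp_A}x\|_A^2\pm 2\,\Re\langle T^{\sharp_A}x,S^{\sharp_A}x\rangle_A.
\]
The first two terms combine, via the identity above, into $\langle(TT^{\sharp_A}+SS^{\sharp_A})x,x\rangle_A$; since $TT^{\sharp_A}+SS^{\sharp_A}$ is $A$-positive, this is bounded by $\omega_A(TT^{\sharp_A}+SS^{\sharp_A})=\|TT^{\sharp_A}+SS^{\sharp_A}\|_A$ using Lemma \ref{s1}(ii). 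For the cross term I would bound $|2\,\Re\langle T^{\sharp_A}x,S^{\sharp_A}x\rangle_A|\le 2|\langle S^{\sharp_A}x,T^{\sharp_A}x\rangle_A|=2|\langle TS^{\sharp_A}x,x\rangle_A|\le 2\omega_A(TS^{\sharp_A})$, the same estimate holding for both choices of sign. Taking the supremum over all unit vectors (using \eqref{fg}) and invoking $\|T\pm S\|_A=\|T^{\sharp_A}\pm S^{\sharp_A}\|_A$ then yields the first inequality.

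The second inequality is purely a matter of assembling the earlier estimates: the triangle inequality together with \eqref{diez} gives $\|TT^{\sharp_A}+SS^{\sharp_A}\|_A\le\|T\|_A^2+\|S\|_A^2$, while \eqref{refine1}, \eqref{sousmultiplicative} and \eqref{diez} give $\omega_A(TS^{\sharp_A})\le\|TS^{\sharp_A}\|_A\le\|T\|_A\|S\|_A$. Substituting these into the radicand produces the perfect square $(\|T\|_A+\|S\|_A)^2$, and taking square roots finishes the argument.

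I expect the only genuinely delicate point to be obtaining the $A$-numerical radius $\omega_A(TS^{\sharp_A})$ --- rather than the larger seminorm $\|TS^{\sharp_A}\|_A$ --- on the cross term. The gain comes from recognizing the cross term, after the adjoint manipulation, as a single value $\langle TS^{\sharp_A}x,x\rangle_A$ of the $A$-numerical range evaluated at a \emph{unit} vector, so that bounding it by $\omega_A(TS^{\sharp_A})$ is legitimate \emph{before} the supremum is taken; splitting it prematurely (as is done in Theorem \ref{refan}) would only produce $\|\cdot\|_A$ and lose the refinement. Everything else is routine bookkeeping with the properties of $\sharp_A$ and of the seminorm $\|\cdot\|_A$.
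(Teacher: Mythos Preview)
Your proof is correct. The key identities $\|T\pm S\|_A=\|T^{\sharp_A}\pm S^{\sharp_A}\|_A$, $\langle TT^{\sharp_A}x,x\rangle_A=\|T^{\sharp_A}x\|_A^2$ and $\langle S^{\sharp_A}x,T^{\sharp_A}x\rangle_A=\langle TS^{\sharp_A}x,x\rangle_A$ all follow from the defining property of $T^{\sharp_A}$, and the estimate on the cross term by $\omega_A(TS^{\sharp_A})$ before passing to the supremum is exactly the right move.

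The paper, by contrast, does not carry out this computation at all: it simply quotes an external result, \cite[Lemma~2.14]{fhac}, which already asserts
\[
\max\bigl\{\|T+S\|_A^2,\|T-S\|_A^2\bigr\}-\|TT^{\sharp_A}+SS^{\sharp_A}\|_A\le 2\,\omega_A(TS^{\sharp_A}),
\]
and reads off the first inequality from this; the second inequality is handled just as you do. Your argument is therefore a self-contained, elementary derivation of what the paper imports as a black box --- in spirit the same estimate (and very likely how the cited lemma is proved), but with the advantage that it keeps the present paper independent of \cite{fhac} at this point. The trick of passing to $T^{\sharp_A}\pm S^{\sharp_A}$ so that the adjoint identity produces $TS^{\sharp_A}$ in the cross term is a clean way to see why the numerical radius (rather than the seminorm) appears.
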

\begin{proof}
It was shown in \cite[Lemma 2.14.]{fhac} that for $T,S\in\mathbb{B}_{A}(\mathcal{H})$ we have
\begin{equation*}
\max\Big\{\left\| T+S\right\|_A^{2}, \left\| T-S\right\|_A^{2}\Big\}-\|TT^{\sharp_A}+SS^{\sharp_A}\|_A\leq 2\omega_A\left(TS^{\sharp_A}\right).
\end{equation*}
This immediately proves the first inequality in Lemma \ref{drog0}. The second inequality in Lemma \ref{drog0} can be seen easily by remarking that $\omega_A(TS^{\sharp_A})\leq \|TS^{\sharp_A}\|_A$ and then proceeding as in the proof of Theorem \ref{refan}.

This finishes the proof of the desired result.
\end{proof}
Another improvement of the first inequality in \eqref{kit}, that involves $\Re_A(T)$ and $\Im_A(T)$ can be seen as follows.
\begin{theorem}\label{thm2}
Let $T\in\mathbb{B}_A(\mathcal{H})$. Then
\begin{equation*}
\frac{1}{4}\|T^{\sharp_A} T+TT^{\sharp_A}\|_A\le \frac{\sqrt{2}}{2}\sqrt{\omega_A^4(T)+\omega_A\Big(\Im_A^2(T)\Re_A^2(T)\Big)}\le \omega_A^{2}(T).
\end{equation*}
\end{theorem}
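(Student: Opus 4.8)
The plan is to follow the template of the proof of \cref{corf}, replacing the use of \cref{refan} by the sharper \cref{drog0}. As in that argument, I would first invoke \eqref{sharpp} to write
\[
\frac{1}{4}\|T^{\sharp_A} T+TT^{\sharp_A}\|_A=\frac{1}{2}\left\|\left[\Re_A^2(T)\right]^{\sharp_A}+\left[\Im_A^2(T)\right]^{\sharp_A}\right\|_A,
\]
and set $U:=\left[\Re_A^2(T)\right]^{\sharp_A}$ and $V:=\left[\Im_A^2(T)\right]^{\sharp_A}$. Since $\Re_A^2(T)$ and $\Im_A^2(T)$ are $A$-positive by \cref{s1}~(vi), hence $A$-selfadjoint, \cref{s1}~(i) together with \eqref{1j} shows that $U$ and $V$ are $A$-selfadjoint and satisfy $U^{\sharp_A}=U$ and $V^{\sharp_A}=V$.

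Next I would apply \cref{drog0} to the pair $U,V$. Because $U^{\sharp_A}=U$ and $V^{\sharp_A}=V$, the right-hand side collapses to
\[
\|U+V\|_A^2\le\|U^2+V^2\|_A+2\omega_A(UV).
\]
The product is $UV=\left[\Re_A^2(T)\right]^{\sharp_A}\left[\Im_A^2(T)\right]^{\sharp_A}=\left(\Im_A^2(T)\Re_A^2(T)\right)^{\sharp_A}$ by the anti-multiplicativity $(PQ)^{\sharp_A}=Q^{\sharp_A}P^{\sharp_A}$. Here the key auxiliary fact is the invariance $\omega_A(X^{\sharp_A})=\omega_A(X)$, which follows from the identity $\langle Xx,x\rangle_A=\langle x,X^{\sharp_A}x\rangle_A=\overline{\langle X^{\sharp_A}x,x\rangle_A}$, so that $|\langle X^{\sharp_A}x,x\rangle_A|=|\langle Xx,x\rangle_A|$ and the two suprema coincide. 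Thus $\omega_A(UV)=\omega_A\big(\Im_A^2(T)\Re_A^2(T)\big)$. For the first summand, the triangle inequality and \cref{s1}~(iii) give $\|U^2+V^2\|_A\le\|U\|_A^2+\|V\|_A^2=\|\Re_A(T)\|_A^4+\|\Im_A(T)\|_A^4$.

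Combining these with \eqref{cc1} and \eqref{cc2}, which bound $\|\Re_A(T)\|_A^2$ and $\|\Im_A(T)\|_A^2$ by $\omega_A^2(T)$, and recalling that the left-hand side equals $\tfrac12\|U+V\|_A$, I obtain
\[
\frac{1}{16}\|T^{\sharp_A} T+TT^{\sharp_A}\|_A^2\le\frac{1}{4}\big(2\omega_A^4(T)\big)+\frac{1}{2}\omega_A\big(\Im_A^2(T)\Re_A^2(T)\big),
\]
which after taking square roots is precisely the first inequality of the theorem. For the second inequality I would simply estimate $\omega_A\big(\Im_A^2(T)\Re_A^2(T)\big)\le\|\Im_A^2(T)\Re_A^2(T)\|_A\le\|\Im_A(T)\|_A^2\|\Re_A(T)\|_A^2\le\omega_A^4(T)$, using $\omega_A\le\|\cdot\|_A$, the submultiplicativity \eqref{sousmultiplicative}, \cref{s1}~(iii), and \eqref{cc1}--\eqref{cc2}; feeding this into $\tfrac{\sqrt2}{2}\sqrt{\omega_A^4(T)+\omega_A(\Im_A^2(T)\Re_A^2(T))}$ yields the bound $\omega_A^2(T)$.

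The only genuinely delicate point is the $A$-numerical-radius invariance $\omega_A(X^{\sharp_A})=\omega_A(X)$, combined with careful bookkeeping of the $\sharp_A$-operation on the products (which is anti-multiplicative, and whose double application is not the identity in general). Everything else is a routine reassembly of inequalities already available in the excerpt. In particular, this argument is sharper than \cref{corf} precisely because \cref{drog0} carries the term $2\omega_A(UV)$ in place of $\|UV+VU\|_A$, and $\omega_A(UV)\le\|UV\|_A$.
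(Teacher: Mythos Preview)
Your proposal is correct and follows essentially the same route as the paper: rewrite $\tfrac14\|T^{\sharp_A}T+TT^{\sharp_A}\|_A$ via \eqref{sharpp}, apply \cref{drog0} to $U=[\Re_A^2(T)]^{\sharp_A}$ and $V=[\Im_A^2(T)]^{\sharp_A}$ using \eqref{1j}, then bound with the triangle inequality, \cref{s1}(iii), and \eqref{cc1}--\eqref{cc2}. Your explicit justification of $\omega_A(X^{\sharp_A})=\omega_A(X)$ is a welcome detail that the paper handles only by the phrase ``similar arguments as above.''
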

\begin{proof}
Since $T=\Re_A(T)+i\Im_A(T)$. Then, by using an argument similar to that used in proof of Theorem \ref{thm1}, we get
\begin{align*}
\frac{1}{4}\|T^{\sharp_A} T+TT^{\sharp_A}\|_A
&=\frac{1}{4}\left\|\left(TT^{\sharp_A} + T^{\sharp_A} T\right)^{\sharp_A}\right\|_A\\
&=\frac{1}{2}\left\|\left([\Re_A(T)]^{\sharp_A}\right)^2+\left([\Im_A(T)]^{\sharp_A}\right)^2\right\|_A\\
&=\frac{1}{2}\left\|\left[\Re_A^2(T)\right]^{\sharp_A}+\left[\Im_A^2(T)\right]^{\sharp_A}\right\|_A.
\end{align*}
So, by applying Lemma \ref{drog0} together with \eqref{1j} and then using similar arguments as above, we get
\begin{align*}
&\frac{1}{4}\|T^{\sharp_A} T+TT^{\sharp_A}\|_A\\
&\le \frac{1}{2}\sqrt{\left\|\big(\left[\Re_A^2(T)\right]^{\sharp_A}\big)^2+\big(\left[\Im_A^2(T)\right]^{\sharp_A}\big)^2\right\|_A
+2\omega_A\Big(\left[\Re_A^2(T)\right]^{\sharp_A}\left[\Im_A^2(T)\right]^{\sharp_A}\Big)}\\
&\le \frac{1}{2}\sqrt{\left\|\Re_A^2(T)\right\|^2+\left\|\Im_A^2(T)\right\|^2+2\omega_A\Big(\Im_A^2(T)\Re_A^2(T)\Big)}\\
&\le \frac{1}{2}\sqrt{\left\|\Re_A(T)\right\|_A^4+\left\|\Im_A(T)\right\|_A^4+2\omega_A\Big(\Im_A^2(T)\Re_A^2(T)\Big)}\\
&\le \frac{\sqrt{2}}{2}\sqrt{\omega_A^4(T)+\omega_A\Big(\Im_A^2(T)\Re_A^2(T)\Big)},
\end{align*}
where the last inequality follow by applying \eqref{cc1} together with \eqref{cc2}. Now, we will prove the second inequality in Theorem \ref{thm2}. By using the second inequality in \eqref{refine1}, we see that
\begin{align*}
\frac{\sqrt{2}}{2}\sqrt{\omega_A^4(T)+\omega_A\Big(\Im_A^2(T)\Re_A^2(T)\Big)}
&\le\frac{\sqrt{2}}{2}\sqrt{\omega_A^4(T)+\Big\|\Im_A^2(T)\Re_A^2(T)\Big\|_A}\\
&\le \frac{\sqrt{2}}{2}\sqrt{\omega_A^4(T)+\left\|\Re_A(T)\right\|_A^2\left\|\Im_A(T)\right\|_A^2\Big) }\\
&\le \frac{\sqrt{2}}{2}\sqrt{\omega_A^4(T)+\omega_A^4(T) }= \omega_A^2(T).
\end{align*}
Hence, the proof is complete.
\end{proof}
Another refinement of the triangle inequality related to $\|\cdot\|_A$ can be stated as follows.
\begin{theorem}\label{moradi}
Let $T,S\in \mathbb{B}_A(\mathcal{H})$. Then,
\begin{align*}
\| T+S\|_A
&\leq \sqrt{\frac{1}{2}\left(\|T\|_A^2+\|S\|_A^2+\sqrt{\left(\|T\|_A^2-\|S\|_A^2\right)^2+4\|TS^{\sharp_A}\|_A^2} \right)+2\omega_A(S^{\sharp_A}T)}\nonumber\\
&\leq \|T\|_A+\|S\|_A.
\end{align*}
\end{theorem}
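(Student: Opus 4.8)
The plan is to combine an elementary expansion of $\|(T+S)x\|_A^2$ with the block-operator spectral-radius technique already used in the proof of Lemma \ref{immmmm2}. First I would fix $x\in\mathcal{H}$ with $\|x\|_A=1$ and, using $\langle Tx,Sx\rangle_A=\langle S^{\sharp_A}Tx,x\rangle_A$, expand
\[
\|(T+S)x\|_A^2=\big\langle (T^{\sharp_A}T+S^{\sharp_A}S)x,x\big\rangle_A+2\,\Re\langle S^{\sharp_A}Tx,x\rangle_A.
\]
Bounding the first summand by $\|T^{\sharp_A}T+S^{\sharp_A}S\|_A$ (the operator $T^{\sharp_A}T+S^{\sharp_A}S$ is $A$-positive, so its values $\langle\cdot\,x,x\rangle_A$ are dominated by its $A$-seminorm) and the second by $2\omega_A(S^{\sharp_A}T)$, then taking the supremum over all such $x$, yields $\|T+S\|_A^2\le \|T^{\sharp_A}T+S^{\sharp_A}S\|_A+2\omega_A(S^{\sharp_A}T)$, so everything reduces to estimating $\|T^{\sharp_A}T+S^{\sharp_A}S\|_A$.

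To handle this term I would imitate the proof of Lemma \ref{immmmm2}. Since $T^{\sharp_A}T+S^{\sharp_A}S$ is $A$-positive, Lemma \ref{s1}(ii) gives $\|T^{\sharp_A}T+S^{\sharp_A}S\|_A=r_A(T^{\sharp_A}T+S^{\sharp_A}S)$. Writing
\[
\begin{pmatrix}T^{\sharp_A}T+S^{\sharp_A}S&0\\0&0\end{pmatrix}=\begin{pmatrix}T^{\sharp_A}&S^{\sharp_A}\\0&0\end{pmatrix}\begin{pmatrix}T&0\\S&0\end{pmatrix}
\]
and applying the commutativity property \eqref{commut} for the diagonal operator $\mathbb{A}=\mathrm{diag}(A,A)$, I would swap the two factors to obtain $r_{\mathbb{A}}$ of $\begin{pmatrix}TT^{\sharp_A}&TS^{\sharp_A}\\ST^{\sharp_A}&SS^{\sharp_A}\end{pmatrix}$, and then dominate it via \cite{fmjom} by the spectral radius of the scalar matrix $\begin{pmatrix}\|T\|_A^2&\|TS^{\sharp_A}\|_A\\\|ST^{\sharp_A}\|_A&\|S\|_A^2\end{pmatrix}$, where the diagonal entries are computed from \eqref{diez}.

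The crucial point, and the step I expect to be the main obstacle, is to see that this scalar matrix is symmetric, i.e. $\|TS^{\sharp_A}\|_A=\|ST^{\sharp_A}\|_A$; otherwise the spectral-radius formula would produce the product $\|TS^{\sharp_A}\|_A\|ST^{\sharp_A}\|_A$ under the inner root instead of $\|TS^{\sharp_A}\|_A^2$. I would establish this equality from the characterization \eqref{fg}: since $\langle TS^{\sharp_A}x,y\rangle_A=\langle S^{\sharp_A}x,T^{\sharp_A}y\rangle_A$ and $\langle ST^{\sharp_A}x,y\rangle_A=\langle T^{\sharp_A}x,S^{\sharp_A}y\rangle_A$, interchanging the names of the unit vectors $x,y$ and using $|\langle u,v\rangle_A|=|\langle v,u\rangle_A|$ shows that the two suprema run over the same set of moduli. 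Granting the symmetry, the spectral radius of the resulting $2\times 2$ nonnegative symmetric matrix equals $\tfrac12\big(\|T\|_A^2+\|S\|_A^2+\sqrt{(\|T\|_A^2-\|S\|_A^2)^2+4\|TS^{\sharp_A}\|_A^2}\big)$, which together with the first paragraph gives the left inequality.

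Finally, for the right-hand inequality I would use submultiplicativity \eqref{sousmultiplicative} together with $\|S^{\sharp_A}\|_A=\|S\|_A$ from \eqref{diez} to get $\|TS^{\sharp_A}\|_A\le\|T\|_A\|S\|_A$; then $\sqrt{(\|T\|_A^2-\|S\|_A^2)^2+4\|T\|_A^2\|S\|_A^2}=\|T\|_A^2+\|S\|_A^2$, so the first summand under the outer root is at most $\|T\|_A^2+\|S\|_A^2$. Since also $\omega_A(S^{\sharp_A}T)\le\|S^{\sharp_A}T\|_A\le\|S\|_A\|T\|_A$ by \eqref{refine1} and \eqref{sousmultiplicative}, the whole quantity under the outer root is at most $\|T\|_A^2+\|S\|_A^2+2\|T\|_A\|S\|_A=(\|T\|_A+\|S\|_A)^2$, and taking square roots completes the argument.
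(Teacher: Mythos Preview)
Your proposal is correct and follows essentially the same route as the paper: the elementary expansion giving $\|T+S\|_A^2\le\|T^{\sharp_A}T+S^{\sharp_A}S\|_A+2\omega_A(S^{\sharp_A}T)$, the block-matrix/commutativity argument reducing $\|T^{\sharp_A}T+S^{\sharp_A}S\|_A$ to the spectral radius of a $2\times2$ scalar matrix, and the same estimates for the upper bound all match the paper's proof. The only minor variation is in justifying $\|TS^{\sharp_A}\|_A=\|ST^{\sharp_A}\|_A$: the paper argues via $(S^{\sharp_A})^{\sharp_A}=P_{\overline{\mathcal{R}(A)}}SP_{\overline{\mathcal{R}(A)}}$ and $AP_{\overline{\mathcal{R}(A)}}=A$, whereas your conjugate-symmetry argument through \eqref{fg} is an equally valid (and slightly more direct) alternative.
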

\begin{proof}
Let $x\in \mathcal{H}$ be such that $\|x\|_A=1$. Then, we see that
\begin{align*}
\|(T+S)x\|_A^2
&=\|Tx\|_A^2+\|Sx\|_A^2+2|\langle Tx,Sx\rangle_A|\\
&=\langle T^{\sharp_A}Tx,x\rangle_A+\langle S^{\sharp_A}Sx,x\rangle_A+2|\langle S^{\sharp_A}Tx,x\rangle_A|\\
&\leq\langle \big(T^{\sharp_A}T+S^{\sharp_A}S\big)x,x\rangle_A+2\omega_A(S^{\sharp_A}T)\\
&=\left\|T^{\sharp_A}T+S^{\sharp_A}S\right\|_A+2\omega_A(S^{\sharp_A}T),
\end{align*}
where the last equality follows from Lemma \ref{s1} (ii) since $T^{\sharp_A}T+S^{\sharp_A}S\geq_A0$. Hence,
\begin{align*}
\|(T+S)x\|_A^2\leq\left\|T^{\sharp_A}T+S^{\sharp_A}S\right\|_A+2\omega_A(S^{\sharp_A}T).
\end{align*}
So, by taking the supremum over all $x\in \mathcal{H}$ with $\|x\|_A=1$ in the last inequality we get
\begin{align}\label{21.12}
\|T+S\|_A^2\leq\left\|T^{\sharp_A}T+S^{\sharp_A}S\right\|_A+2\omega_A(S^{\sharp_A}T).
\end{align}
On the other hand, let $\mathbb{A}=\begin{pmatrix}
A &0\\
0 &A
\end{pmatrix}$. Since $T^{\sharp_A}T+S^{\sharp_A}S\geq_A0$, then by applying Lemma \ref{s1} (ii) we observe that
\begin{align*}
\|T^{\sharp_A}T+S^{\sharp_A}S\|_A
& =r_A(T^{\sharp_A}T+S^{\sharp_A}S)\nonumber\\
&=r_{\mathbb{A}}\left[\begin{pmatrix}
T^{\sharp_A}T+S^{\sharp_A}S&0 \\
0&0
\end{pmatrix}\right]\\
&=r_{\mathbb{A}}\left[\begin{pmatrix}
T^{\sharp_A}&S^{\sharp_A} \\
0&0
\end{pmatrix}\begin{pmatrix}
T&0 \\
S&0
\end{pmatrix}\right].
\end{align*}
Moreover, an application of \eqref{commut} gives
\begin{align}\label{jj28}
\|T^{\sharp_A}T+S^{\sharp_A}S\|_A
&=r_{\mathbb{A}}\left[\begin{pmatrix}
T&0 \\
S&0
\end{pmatrix}\begin{pmatrix}
T^{\sharp_A}&S^{\sharp_A} \\
0&0
\end{pmatrix}\right]\nonumber\\
&=r_{\mathbb{A}}\left[\begin{pmatrix}
TT^{\sharp_A}&TS^{\sharp_A}\\
ST^{\sharp_A}&SS^{\sharp_A}
\end{pmatrix}\right]\nonumber\\
&\leq
r\left[\begin{pmatrix}
\|T\|_A^2&\|TS^{\sharp_A}\|_A\\
\|ST^{\sharp_A}\|_A&\|S\|_A^2
\end{pmatrix}\right],
\end{align}
where the last inequality follows from \cite{fmjom} together with \eqref{diez}. In addition, since $\mathcal{R}(T^{\sharp_A})\subseteq \overline{\mathcal{R}(A)}$ then $P_{\overline{\mathcal{R}(A)}}T^{\sharp_A}=T^{\sharp_A}$. So, by applying \eqref{fg} we see that
\begin{align*}
\|TS^{\sharp_A}\|_A
&=\|P_{\overline{\mathcal{R}(A)}}SP_{\overline{\mathcal{R}(A)}}T^{\sharp_A}\|_A\\
&=\|P_{\overline{\mathcal{R}(A)}}ST^{\sharp_A}\|_A\\
&=\sup\left\{|\langle P_{\overline{\mathcal{R}(A)}}ST^{\sharp_A}x, y\rangle_A|\,;\;x,y\in \mathcal{H},\,\|x\|_{A}=\|y\|_{A}= 1\right\}=\|ST^{\sharp_A}\|_A,
\end{align*}
where the last equality holds since $AP_{\overline{\mathcal{R}(A)}}=A$. So, it is not difficult to verify that
\begin{align*}
r\left[\begin{pmatrix}
\|T\|_A^2&\|TS^{\sharp_A}\|_A\\
\|TS^{\sharp_A}\|_A&\|S\|_A^2
\end{pmatrix}\right]
&=\frac{1}{2}\left(\|T\|_A^2+\|S\|_A^2+\sqrt{\left(\|T\|_A^2-\|S\|_A^2\right)^2+4\|TS^{\sharp_A}\|_A^2} \right).
\end{align*}
This implies, through \eqref{jj28}, that
\begin{align*}\label{21.122}
\|T^{\sharp_A}T+S^{\sharp_A}S\|_A\leq\frac{1}{2}\left(\|T\|_A^2+\|S\|_A^2+\sqrt{\left(\|T\|_A^2-\|S\|_A^2\right)^2+4\|TS^{\sharp_A}\|_A^2} \right).
\end{align*}
Therefore, we prove the first inequality in Theorem \ref{moradi} by combining the last inequality together with \eqref{21.12}. Now, one observes that
\begin{align*}
&\frac{1}{2}\left(\|T\|_A^2+\|S\|_A^2+\sqrt{\left(\|T\|_A^2-\|S\|_A^2\right)^2+4\|TS^{\sharp_A}\|_A^2} \right)+2\omega_A(S^{\sharp_A}T)\\
&\leq\frac{1}{2}\left(\|T\|_A^2+\|S\|_A^2+\sqrt{\left(\|T\|_A^2-\|S\|_A^2\right)^2+4\|T\|_A^2\|S\|_A^2} \right)+2\|S^{\sharp_A}T\|_A\\
&\leq\frac{1}{2}\left(\|T\|_A^2+\|S\|_A^2+\sqrt{\left(\|T\|_A^2+\|S\|_A^2\right)^2} \right)+2\|T\|_A\|S\|_A\\
&=\|T\|_A^2+\|S\|_A^2+2\|T\|_A\|S\|_A\\
&= \left(\|T\|_A+\|S\|_A\right)^2.
\end{align*}
This immediately proves the second inequality in Theorem \ref{moradi}. Therefore, the proof is finished.
\end{proof}

\end{document}